\documentclass[12pt,a4paper,oneside,reqno]{amsart}
\ifdefined\SMART
\usepackage[paperwidth=12cm,paperheight=24cm,left=5mm,right=5mm,top=10mm,bottom=5mm]{geometry}
\else
\calclayout
\fi

\usepackage{color,amssymb,latexsym,amsfonts,textcomp,fancyhdr, calc,graphicx}
\usepackage{pdfpages,amsfonts,amsthm,amssymb,latexsym,graphicx,leftidx,fancyhdr}
\usepackage{amsmath,amstext,amsthm,amssymb,amsxtra}
\newtheoremstyle{nobracket}
{}{}{\itshape}{}{\bfseries}{.}{ }%
{\thmname{#1}\thmnumber{ #2}\thmnote{ #3}}
\usepackage{enumitem}
\theoremstyle{nobracket}
\usepackage[dvipsnames,svgnames,table,rgb]{xcolor}
\usepackage[allcolors=blue,allbordercolors=blue,pdfborderstyle={/S/U/W 1}]{hyperref}
\usepackage[ocgcolorlinks]{ocgx2}
\usepackage{dsfont}
\usepackage[framemethod=tikz]{mdframed}
\usepackage{asymptote}
\usepackage{enumerate}
\usepackage{graphicx}
\graphicspath{{images/}}
\usepackage{subcaption}
\usepackage{float}

\usepackage{txfonts,pxfonts,tikz} 
\usepackage{tgschola}
\usepackage[T1]{fontenc}
\usepackage{mathtools}

\newtheorem{theorem}{Theorem}[section]
\newtheorem{corollary}{Corollary}[section]
\newtheorem{lemma}{Lemma}[section]

\theoremstyle{definition}
\newtheorem{definition}{Definition}[section]

\newcommand{\beql}[1]{\begin{equation}\label{#1}}
	\newcommand{\eeq}{\end{equation}}
\newcommand{\comment}[1]{}

\newcommand{\Floor}[1]{{\left\lfloor{#1}\right\rfloor}}

\DeclareMathOperator{\rank}{rank}

\newcommand{\RR}{{\mathbb R}}

\newcommand{\ZZ}{{\mathbb Z}}
\newcommand{\NN}{{\mathbb N}}

\newcommand{\vol}{{\rm vol\,}}

\newcounter{rem}
\title[Bounded and Measurable common Fundamental domains]{Bounded and measurable common fundamental domains for two lattices}

\author{Emmanouil Spyridakis}
\address{\href{http://math.uoc.gr/en/index.html}{Department of Mathematics and Applied Mathematics}, University of Crete,\\Voutes Campus, 70013 Heraklion, Greece.}
\email{manos.ch.spyridakis@gmail.com}

\begin{document}
\begin{abstract}
    Suppose that $L, M$ are two full-rank lattices in Euclidean space with $\vol(L)=\vol(M)$. We give a new proof on the existence of a bounded and Lebesgue measurable set that tiles $\RR^d$ with both $L,M$ using the measurable Hall's Theorem \cite{CS22}. This proof is direct and does not go
    through the intermediate results on cut-and-project sets involved in the proof given in \cite{GK25}. We also show the existence of a bounded, set-theoretic (i.e., not necessarily measurable) common fundamental domain of $L,M$ assuming only that $\vol(L)=\vol(M)$. Combining these results we show the existence of a bounded and Lebesgue measurable common fundamental domain for any two full-rank lattices of equal volumes. Finally we show that a set-theoretic bounded, common fundamental domain cannot exist when $\vol(L)\neq \vol(M)$. 
\end{abstract}
\maketitle
\tableofcontents
\section{Introduction}

Recently, there has been a lot of work regarding the existence of a bounded and Lebesgue measurable common fundamental domain of two full-rank lattices in $\RR^d$ when their volumes are equal. In \cite{GK25}, Theorem 1 showed that for any two full-rank lattices of the same volume, there exists a bounded and measurable set that tiles $\RR^d$ with both lattices (a set $E$ tiles with a lattice $L$ if almost every point in Euclidean space can be written uniquely as
a sum of an element of $E$ and an element of $L$). The method used there involved cut-and-project sets to prove the special but decisive case when the sum of the two lattices is dense in Euclidean space. We will give a new proof for Theorem 1 in \cite{GK25} using the measurable Hall's Theorem \cite{CS22} in the cases where cut-and-project sets were involved in the proofs of \cite{GK25}. The measurable Hall's Theorem has lately seen several applications, one of which is in cut-and-project sets (see \cite{EGKL25}, section 4). This application made us believe that the measurable Hall's Theorem can be used to prove the results showed in \cite{GK25} bypassing the intermediate results about cut-and-project sets used there and providing a direct proof.

Furthermore, Theorem 3 in \cite{Kol97} gives us a sufficient condition for the existence of a Lebesgue measurable common fundamental domain $F$ for a finite collection of full-rank lattices. The conditions needed there, are that the lattices admit a common fundamental domain $F_1$ (no measurability is assumed) along with a measurable set that tiles the Euclidean space by translations of each lattice in the collection. In particular, from the proof of this theorem, one can easily conclude that $F\subseteq F_1 \cup F_2$ and so if $F_1, F_2$ are bounded sets, then so is $F$. Using this, along with the measurable Hall's Theorem we mentioned previously, we will show the following:

\begin{theorem}\label{BMF}
Any two full-rank lattices $L, M \subseteq \RR^d$, admitting a bounded common fundamental domain on $\RR^d$, also admit one that is bounded and measurable.
\end{theorem}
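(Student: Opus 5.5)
Given a bounded common fundamental domain $F_1$ of $L,M$ (no measurability assumed), we want to apply Theorem 3 of \cite{Kol97}. That theorem needs, besides $F_1$, a measurable set $F_2$ that tiles $\RR^d$ with both $L$ and $M$. Its proof yields a measurable common fundamental domain $F \subseteq F_1\cup F_2$. So if $F_2$ can be chosen bounded, $F$ is bounded, and we are done. The proof therefore reduces to producing a bounded measurable common tile $F_2$, and this will be done via the measurable Hall's Theorem of \cite{CS22}.

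\textbf{Step 1: equal volumes.} First we extract $\vol(L)=\vol(M)$ from the existence of $F_1$. If $F_1\subseteq B(0,R)$, then for large $N$ we count how many translates $F_1+\ell$, $\ell\in L$, are needed to cover the cube $[-N,N]^d$ and how many lie inside $[-N-R,N+R]^d$. These counts are $\#(L\cap[-N\pm R,N\pm R]^d)\sim (2N)^d/\vol(L)$. Every point of $\RR^d$ lies in exactly one $L$-translate of $F_1$ and in exactly one $M$-translate, so the points of $L$ and of $M$ in a large cube are matched up to boundary terms. Concretely, $\ell\mapsto$ (the unique $m$ with $\ell\in F_1+m$) is a bijection $L\to M$ that moves points by at most $\operatorname{diam}$-bounded amounts. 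A bounded-displacement bijection forces equal densities, so $\vol(L)=\vol(M)$. This step uses only counting, no measurability.

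\textbf{Step 2: a bounded measurable common tile via Hall.} Fix bounded measurable fundamental parallelepipeds $P_L, P_M$. We consider the group $L+M$ acting on $\RR^d$ and the bipartite relation between $\RR^d/L$-representatives and $\RR^d/M$-representatives: $x\in P_L$ is joined to $y\in P_M$ if $x-y$ lies in $L+M$ and $|x-y|\le C$. A measurable perfect matching $\phi$ of this bounded-degree Borel graph gives
\[
F_2=\{x\in P_L\}\ \text{reassembled as}\ \bigcup_{\ell,m}(\text{pieces})
\]
that is, a measurable bounded set tiling with both lattices. Concretely, $F_2$ consists of the points $x+\ell_x$, where $\ell_x\in L$ is chosen so that $x+\ell_x\equiv \phi(x)\pmod M$ inside a bounded window. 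Hall's condition (in the measurable, expansion form of \cite{CS22}) is verified by the equal-volume counting from Step 1: for any finite set of vertices in a large box, its neighbours within distance $C$ number at least as many, once $C$ is large compared to the diameters of $P_L$ and $P_M$. The graph is in fact induced by the bounded-displacement bijection from Step 1, which already gives a Borel-free perfect matching. In the dense case $L+M$ one needs the strict expansion hypothesis of \cite{CS22} to obtain a measurable (a.e.) matching. The non-dense case reduces to a lower-dimensional or discrete situation handled directly.

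\textbf{Main obstacle.} The hardest step is verifying the quantitative (strict) Hall expansion required by the measurable Hall's Theorem, uniformly for all finite subsets, with a single bound $C$. I would first try a discrepancy estimate: lattice point counts in a set $A$ and in its $C$-neighbourhood differ by $O(\text{boundary})$, and the $\varepsilon$-expansion gain comes from enlarging $C$. Finally, the a.e.\ defect of the matching is absorbed by Kolountzakis's construction, since Theorem 3 of \cite{Kol97} only requires tiling almost everywhere.
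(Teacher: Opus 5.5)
Your architecture matches the paper's (the given $F_1$ as $\Omega'$, a bounded measurable common almost-tile from \cite{CS22} as $\Omega''$, then Theorem~\ref{MeasurableFD}). Step~1 is also correct. But Step~2, which carries all the content, has a genuine gap. The graph you define does not produce a bounded tile. For $x\in P_L$, $y\in P_M$ the condition $|x-y|\le C$ is vacuous once $C$ exceeds the diameters. When $L+M$ is dense, $(x+L+M)\cap P_M$ is infinite, so the graph is not of bounded degree. More seriously, a bound on $|\phi(x)-x|=|\ell+m|$ gives no bound on $\ell_x$. Indeed, $\{\ell+m:\ |\ell|\le R\}$ is a finite union of cosets of $M$, hence meets $B(0,C)$ in finitely many points, while $(L+M)\cap B(0,C)$ is infinite. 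So $F_2=\{x+\ell_x\}$ need not be bounded. Boundedness requires the matching to use a fixed finite set $g_1,\dots,g_n\in L+M$, i.e.\ a finite-piece equidecomposition. Moreover, Theorem~\ref{CS22} concerns a standard \emph{probability} space with a free pmp action of a finitely generated abelian group, applied to $G$-\emph{uniform} sets. Lebesgue measure on $\RR^d$ is not of this form, and your sketch sets up none of these hypotheses. The paper passes to the torus $X=\RR^d/H$, where $H\subseteq L$ is a full-rank sublattice whose fundamental parallelepiped contains $P_L\cup P_M$. It lets $G=(L+M)/H$ act by translations and gets uniformity of $P_L,P_M$ from nonempty interior plus density of $G$ (Lemma~\ref{Guni}).

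Second, the obstacle you single out is not there. Theorem~\ref{CS22} needs no strict Hall expansion: for $G$-uniform sets, (b) equidecomposability with \emph{arbitrary} pieces already implies (c) equidecomposability with measurable pieces. And (b) is exactly what the hypothesis gives you. By Lemma~\ref{eq}, $F_1$ yields $P_L\overset{L+M}{\sim}P_M$ with finitely many pieces; your remark that the bijection ``already gives a perfect matching'' is this observation. Since $P_L,P_M$ lie in one fundamental domain of $H$, this descends to a $G$-equidecomposition in $X$. So no discrepancy estimate is needed, and Step~1 is not used at all, although it is a valid direct proof of Corollary~\ref{Evol}.

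Finally, ``the non-dense case reduces to a lower-dimensional or discrete situation handled directly'' is not an argument. If $\overline{L+M}=\ZZ^m\times\RR^{d-m}$ with $0<m<d$, then $G$ is not dense in the full torus, so Lemma~\ref{Guni} does not apply there. The paper instead runs the argument of Theorem~\ref{CS22} inside $\ZZ^m\times\RR^{d-m}$ with counting $\otimes$ Lebesgue measure. It then thickens the resulting almost-tile by $[0,1)^m\times\{0\}^{d-m}$ and checks, slice by slice, that this is an almost-tile of $\RR^d$ before applying Theorem~\ref{MeasurableFD}. For $m=d$, $F_1\cap(L+M)$ is a finite common fundamental domain in the lattice $L+M$ (Lemma~\ref{FcapG}), and adding a fundamental parallelepiped of $L+M$ finishes.
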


Notice that we do not assume that the two lattices have equal volume. However, as an immediate consequence of Theorem \ref{BMF}, we see that this is the only possible way; any two full-rank lattices of unequal volumes cannot admit even a non-measurable bounded fundamental domain (see Corollary  \ref{Evol}). This was partially proved in \cite{GKS25} Theorem 1.1, in the special case when the sum of the two lattices is direct.

Moreover, in \cite{Kol97} section 3.2, the author showed that for any finite collection of full-rank lattices in $\RR^d$ with equal volumes and whose sum is direct (i.e., they pairwise intersect at the origin) and dense in $\RR^d$, there exists a bounded, not necessarily measurable common fundamental domain. We extend this result for any two full-rank lattices as follows:

\begin{theorem}\label{BF}
    Any two full-rank lattices $L, M\subseteq \RR^d$ of the same volume, admit a bounded, not necessarily measurable, common fundamental domain on $\RR^d$.
\end{theorem}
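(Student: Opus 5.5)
The plan is to phrase the problem as a perfect matching problem between $\RR^d/L$ and $\RR^d/M$. A set $E\subseteq\RR^d$ is a common fundamental domain exactly when it meets every coset $x+L$ in exactly one point and every coset $y+M$ in exactly one point. So fix a bounded set $S\subseteq\RR^d$, to be chosen below, and define a bipartite graph $G_S$. Its left vertices are the cosets of $L$ and its right vertices are the cosets of $M$. Put an edge between $a+L$ and $b+M$ whenever $(a+L)\cap(b+M)\cap S\neq\emptyset$. Suppose $G_S$ has a perfect matching. For each matched pair choose (by the axiom of choice) a point $p\in (a+L)\cap(b+M)\cap S$, and let $E$ be the set of chosen points. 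Then each $L$-coset and each $M$-coset contains exactly one point of $E$: distinct chosen points lie in distinct $L$-cosets and distinct $M$-cosets, since a matching uses each vertex once. Hence $E\subseteq S$ is a bounded common fundamental domain. Since $S$ is bounded and lattices are discrete, every coset meets $S$ in finitely many points, so $G_S$ is locally finite.

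To get the perfect matching I will use the infinite version of Hall's theorem for locally finite bipartite graphs. If every finite set of left vertices has at least as many neighbours, and likewise for the right side, then there is a perfect matching. One gets a matching saturating each side by Rado's compactness argument, and then combines the two into a perfect matching by the Kőnig/Cantor--Schr\"oder--Bernstein alternating-path argument. No countability is needed for this, since each connected component of $G_S$ is countable and we may work component by component.

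The main obstacle is verifying Hall's condition, and this is where $\vol(L)=\vol(M)=:V$ enters. Naive counting of lattice points in a ball only gives $|N(A)|\ge\theta|A|$ with $\theta<1$, so I would instead construct a \emph{fractional} perfect matching. Take bounded fundamental domains $P_L,P_M$ of $L,M$ (for instance half-open parallelepipeds), and set
$$ f=\frac1V\,\mathbf 1_{P_L}*\mathbf 1_{P_M}, \qquad S=\operatorname{supp} f\subseteq \overline{P_L+P_M}, $$
which is bounded. For every $x$, $\sum_{l\in L} f(x+l)=\frac1V\int \mathbf 1_{P_M}=1$, using that the $L$-periodization of $\mathbf 1_{P_L}$ is identically $1$. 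Symmetrically, $\sum_{m\in M}f(x+m)=\frac1V\vol(P_L)=1$, and it is exactly here that equal volumes are used.

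Define the weight of an edge $(a+L,b+M)$ as $w=\sum_{p\in(a+L)\cap(b+M)} f(p)$, which is a finite sum. Then the total weight at every vertex is exactly $1$. For a finite set $A$ of $L$-cosets, we get $|A|=\sum_{\text{edges at }A} w\le \sum_{\text{edges at }N(A)} w = |N(A)|$. The same argument applies to finite sets of $M$-cosets. So Hall's condition holds on both sides, the graph $G_S$ has a perfect matching, and the chosen points form a common fundamental domain contained in the bounded set $S$. This proves Theorem \ref{BF}.
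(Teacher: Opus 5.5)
Your proof is correct, and it takes a genuinely different route from the paper's.

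The paper first normalizes $\overline{L+M}=\ZZ^m\times\RR^{d-m}$ and writes $F=[0,1)^m\times\{0\}^{d-m}+F_1+F_2$, where $F_2$ is a set of representatives of $\overline{L+M}/(L+M)$ chosen near the origin using density. It then builds the common transversal $F_1$ of $L,M$ inside $L+M$ by cases on $\rank(L\cap M)$:
\begin{itemize}
\item when the rank is $d$, by counting cosets of the finite-index sublattice;
\item when the rank is $0$, by invoking the Duneau--Oguey bounded bijection directly;
\item when $0<r<d$, by the same bijection after splitting $L=L_1\oplus H$, $M=M_1\oplus H$ and projecting away $H$.
\end{itemize}

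You instead solve a single matching problem between $\RR^d/L$ and $\RR^d/M$. The kernel $f=\frac1V\mathbf 1_{P_L}*\mathbf 1_{P_M}$ is a fractional perfect matching, and its two marginal identities, which hold at every point, are exactly where $\vol(L)=\vol(M)$ enters. The weight count then gives Hall's condition on both sides. If $S$ is the closed support, edges of zero weight are harmless, since every positive-weight edge lies in $G_S$. Infinite Hall for locally finite graphs, followed by the K\H{o}nig (Cantor--Schr\"oder--Bernstein) alternating-path step, yields the perfect matching.

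Your route buys uniformity and self-containedness:
\begin{itemize}
\item it needs no structure theorem for closed subgroups of $\RR^d$ and no case split;
\item it needs no external bounded-bijection theorem, since the same weights $\frac1V m_d\bigl((l+P_L)\cap(m+P_M)\bigr)$ on $L\times M$ would reprove Duneau--Oguey;
\item it gives the explicit bound $E\subseteq\overline{P_L+P_M}$.
\end{itemize}
The paper's route buys explicit structure instead. In particular, in the commensurable case it shows that the transversal of $L,M$ in $L+M$ can be taken finite, and this is reused in the fully commensurable case of the proof of Theorem~\ref{BMF}. Its case decomposition also mirrors the one used in that later proof.
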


Finally, the main result of this paper comes immediately after combining Theorem \ref{BF} and Theorem \ref{BMF}:

\begin{theorem}\label{MT}
    Any two full-rank lattices $L, M \subseteq \RR^d$ of the same volume admit a bounded, (Lebesgue) measurable common fundamental domain in $\RR^d$.
\end{theorem}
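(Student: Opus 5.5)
Theorem \ref{MT} follows by chaining the two earlier results. First, given full-rank lattices $L, M \subseteq \RR^d$ with $\vol(L)=\vol(M)$, we apply Theorem \ref{BF}. This produces a bounded set $F_1 \subseteq \RR^d$ that is a set-theoretic common fundamental domain for $L$ and $M$: every $x\in\RR^d$ can be written uniquely as $x=f+\ell$ with $f\in F_1$, $\ell\in L$, and likewise with $M$ in place of $L$. No measurability of $F_1$ is claimed or needed.

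Second, since $L,M$ now admit a bounded common fundamental domain, they satisfy the hypothesis of Theorem \ref{BMF}. That theorem gives a bounded, Lebesgue measurable common fundamental domain $F$, which is exactly the conclusion of Theorem \ref{MT}.

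There is no real obstacle at this level, because all the difficulty sits in the two cited theorems. Theorem \ref{BF} presumably does the combinatorial matching of $L$-cosets and $M$-cosets inside a bounded region using the equal-volume assumption. Theorem \ref{BMF} combines the measurable Hall's Theorem, which gives a bounded measurable set $F_2$ tiling with both lattices, with Kolountzakis' construction, which yields $F\subseteq F_1\cup F_2$ and hence boundedness. The only point to check is notational consistency. Theorem \ref{BMF} must conclude an honest measurable common fundamental domain, meaning exact uniqueness of representation everywhere, not merely a.e. tiling. That is how it is stated, so the composition is valid.
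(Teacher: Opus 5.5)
Your proof is correct and is exactly the paper's argument. Theorem \ref{BF} supplies a bounded, not necessarily measurable, common fundamental domain, and Theorem \ref{BMF} then upgrades it to a bounded Lebesgue measurable one (via Kolountzakis' theorem, giving $F \subseteq F_1 \cup F_2$).
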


To formalize the discussion above, we next introduce the notation adopted throughout the paper.

\textbf{Notation}: A  lattice $L$ in $\RR^d$ is a discrete subgroup of $\RR^d$. The dimension of the $\RR$-linear subspace that is spanned by $L$ is called the \textit{rank} of $L$. In particular, if $L$ spans $\RR^d$, we will call it a full-rank lattice.
It is known that any full-rank lattice on $\RR^d$ is equal to $A\ZZ^d$ for some non-singular $d\times d$ matrix. The matrix $A$ represents a $\ZZ$-basis of $L$. Even though a basis of $L$ is not unique and so neither is the matrix $A$, what remains the same between the matrices that represent $L$ is the absolute value of their determinant and this number is called \textit{volume} of $L$ and we denote it by $\vol(L):= |\det(A)|$. A \textit{fundamental domain}  of $L=A\ZZ^d$ in $\RR^d$ is a set that contains exactly one representative from each coset of $L$ in $\RR^d$. If a fundamental domain is measurable then its measure is $\vol(L)$. (see Section \ref{BFD} for more details)

The structure of the rest of this paper is as follows.

In the preliminary Section $2$, we review fundamental domains, the notion of equidecomposability, and their relationship, ending up to stating measurable Hall's Theorem \cite{CS22}. 

In Section $3$ we prove Theorem \ref{BF} and in Section $4$ we prove  Theorem \ref{BMF} and the main result, Theorem \ref{MT}.
\section{Bounded Fundamental domains and equidecompositions}
In this section, we introduce the notion of equidecomposition and examine its relationship with the existence of a bounded fundamental domain for a full-rank lattice. We establish several preliminary lemmas which, although technically straightforward, play a crucial role in the proofs of our theorems. Finally, we close this section by stating the measurable Hall's Theorem written  by Cieśla and
Sabok \cite{CS22}, which will be used later.
\subsection{Bounded Fundamental domains}\label{BFD}
A \textit{fundamental domain} $F$ of $H$ in $G\subseteq \RR^d$ is a set that contains exactly one representative from each class in $G/H$. A measurable, bounded common fundamental domain of a given full-rank lattice $L\subseteq \RR^d$ in $\RR^d$ always exists. A standard example of such a set is the so-called \textit{fundamental parallelepiped} of $L$ with respect to a given basis of $L=<v_1,...,v_d>_\ZZ$:
\begin{equation*}
    P_L := \Bigl\{\sum_{i\le d} t_iv_i \: | \: t_i \in [-\frac{1}{2}, \frac{1}{2})\Bigr\}
\end{equation*}
\begin{figure}[H]
    \centering
    \begin{subfigure}{0.45\textwidth}
        \centering
        \includegraphics[width=\linewidth]{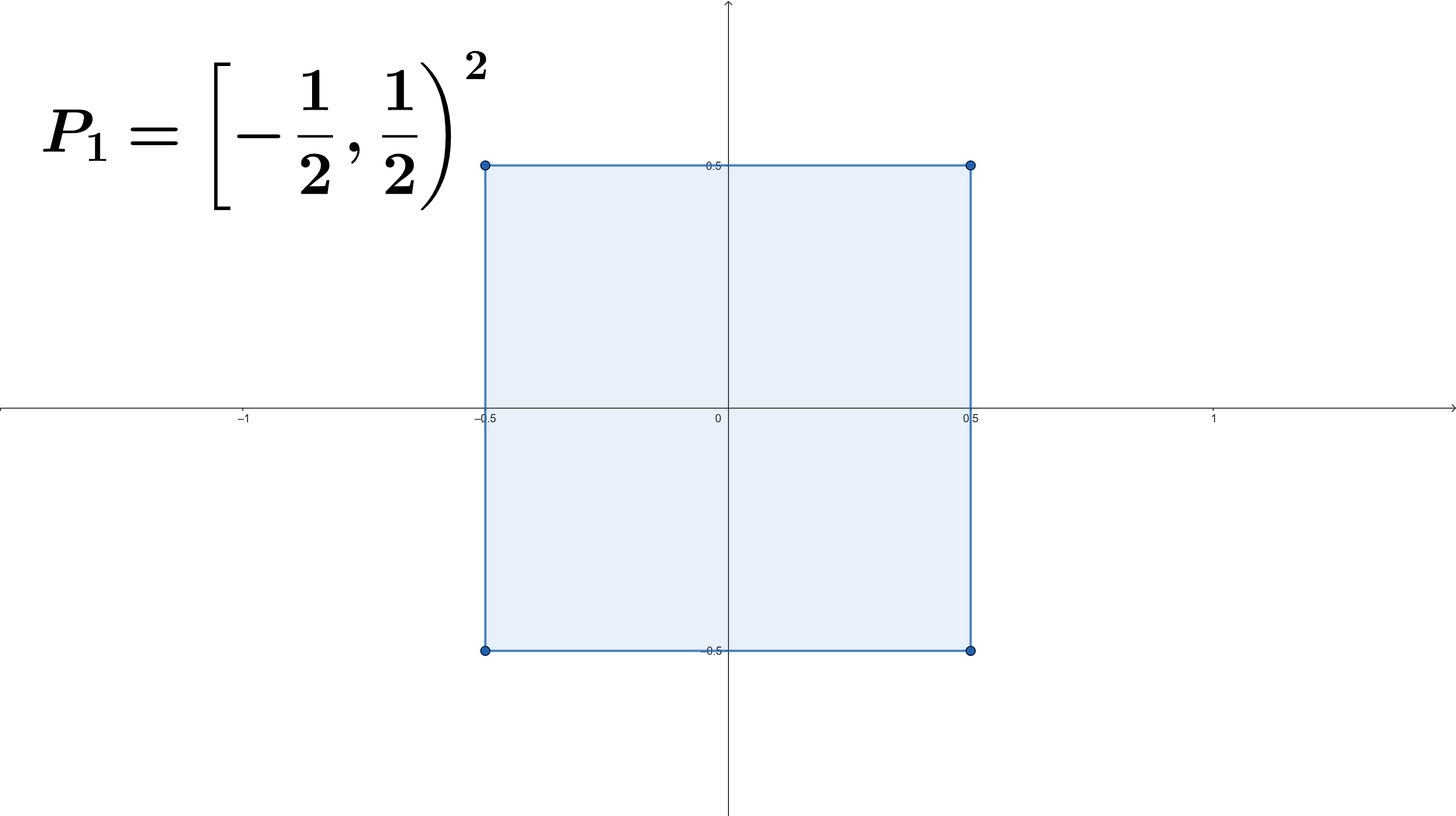}
        \label{fig:left}
    \end{subfigure}
    \hfill
    \begin{subfigure}{0.45\textwidth}
        \centering
        \includegraphics[width=\linewidth]{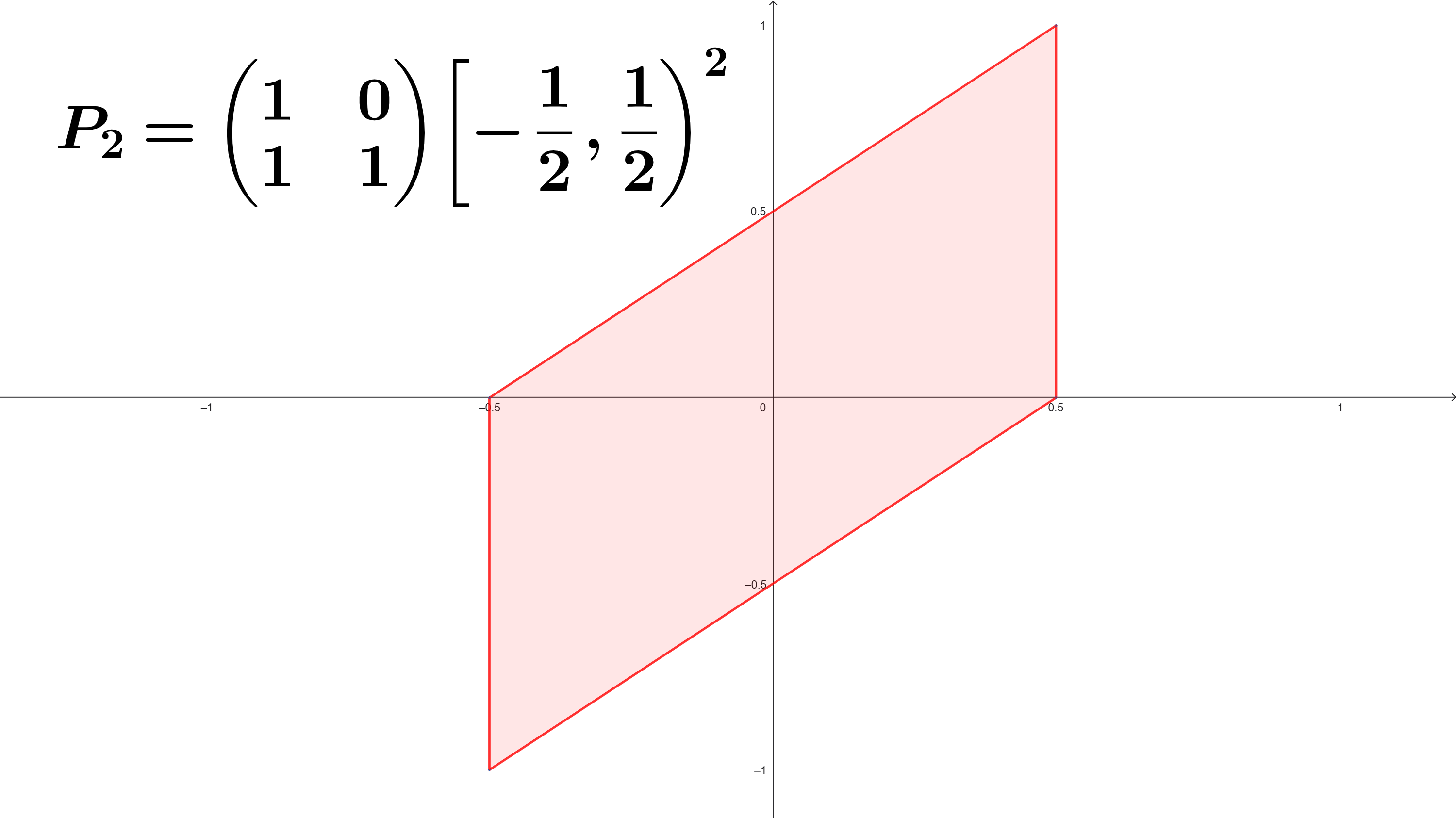}
        \label{fig:right}
    \end{subfigure}
    \caption{Two different fundamental parallelepipeds of the lattice $\ZZ^2$. $P_1$ with respect to the basis $\{(1,0), (0,1)\}$ (left) and  $P_2$ with respect to the basis $\{(1,1),(0,1)\}$ (right).}
\end{figure}

The following lemma states that from a fundamental domain of a lattice $L$ in $\RR^d$ we can always find a fundamental domain of $L$ in a subgroup $G\subseteq \RR^d$:
\begin{lemma}\label{FcapG}
Let $L\subseteq \RR^d$ be a full-rank lattice and $G$ any subgroup of $\RR^d$ such that:
\begin{equation*}
    L\subseteq G\subseteq\RR^d
\end{equation*}
If $F$ is a fundamental domain of $L$ in $\RR^d$, then $F\cap G$ is a fundamental domain of $L$ in $G$.
\end{lemma}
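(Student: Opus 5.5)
The plan is to check the two defining properties of a fundamental domain of $L$ in $G$ directly, namely that $F\cap G$ meets every coset $g+L$ with $g\in G$, and that it meets each such coset in at most one point. The only structural fact needed is that $L\subseteq G$ and $G$ is a subgroup, so every coset $g+L$ with $g\in G$ lies entirely inside $G$. In other words, the cosets of $L$ in $G$ are exactly those cosets of $L$ in $\RR^d$ that are contained in $G$, and every other coset of $L$ in $\RR^d$ is disjoint from $G$.

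\textbf{Existence.} Take $g\in G$. Since $F$ is a fundamental domain of $L$ in $\RR^d$, there is $x\in F$ with $x\in g+L$, say $x=g+\ell$ with $\ell\in L$. Because $L\subseteq G$ and $G$ is closed under addition, $x=g+\ell\in G$. Hence $x\in F\cap G$, and $x$ represents the class of $g$ in $G/L$.

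\textbf{Uniqueness.} Suppose $x,y\in F\cap G$ lie in the same class of $G/L$, i.e.\ $x-y\in L$. Then $x$ and $y$ also lie in the same coset of $L$ in $\RR^d$. Since $F$ contains exactly one representative of each such coset, $x=y$.

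There is no real obstacle here. The only point requiring care is the observation in the first paragraph that cosets of $L$ contained in $G$ stay in $G$, which is what allows the representative chosen from $F$ to be taken inside $G$. It uses exactly the hypotheses $L\subseteq G$ and that $G$ is a group.
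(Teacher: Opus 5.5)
Your proof is correct and follows the paper's argument essentially step for step. Existence comes from taking the representative $x=g+\ell\in F$ and observing $x\in G$ because $L\subseteq G$, and uniqueness is inherited directly from $F$ being a fundamental domain of $L$ in $\RR^d$.
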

\begin{proof}
    We will first show that $F\cap G$ contains at least one representative from each class in the quotient group $G/L$. Consider a class $g\mod(L) \in G/L$ for some $g\in G$. Since $F$ is a  fundamental domain of $L$ in $\RR^d$ there exists an element $x\in F$ for which:
    \begin{equation*}
        x=g\mod(L)
    \end{equation*}
    watching $g\mod(L)$ as a class in $\RR^d/L$.
    This implies that $x=g+l$ for some $l\in L$. Since $G$ is a group containing $L$ and $g\in G$, we have that $x\in F\cap G$.

    Now we will show that $F\cap G$ contains at most one representative from each class in $G/L$. This comes immediately from the fact that $F$ contains exactly one element from each class in $\RR^d/L$ as a fundamental domain of $L$ in $\RR^d$, and the proof is complete.
    \end{proof}
    \subsection{Equidecomposability}
    Consider a space $X$ endowed with an action of a group $G$. We use $g\cdot x$ for the action of an element $g\in G$ acting on a point $x\in X$.

    \begin{definition}\label{equidecomposition}
    We say that two sets $A, B\subseteq X$ are $G$-equidecomposable if there exist finitely many sets $A_1,..., A_n \subseteq X$ and elements $g_1,...,g_n\in G$ such that $\{A_j\}_{j\le n}$ forms a partition of $A$ while $\{g_j \cdot A_j\}_{j\le n}$ forms a partition of $B$. The sets $A_1,...,A_n$ are called pieces of the $G$-equidecomposition between $A,B$.
    \end{definition}

    It is known and easy to see that equidecomposability with respect to a group action $G$ on a space $X$ is an equivalence relation. For this reason, whenever we have two sets $A,B\subseteq X$ that are $G$-equidecomposable, we will at times write:
    \begin{equation*}
        A\overset{G}{\sim} B
    \end{equation*}  

\textbf{Remark}.
	Throughout this text, $X$ will be a locally compact subset of $\RR^d$, usually a bounded fundamental domain of a full-rank lattice $L$ inside a subgroup $H\supseteq L$ of $\RR^d$. As a fundamental domain of $L$ in $H$, $X$ can be identified with the quotient group $H/L$. Via this identification, $X$ acquires a group structure induced by the abelian group structure of $H/L$. As for the action group $G$ of $X$, will usually be a subgroup of the quotient group $H/L$ and for this reason we denote the action of an element $g\in G$ on a point $x\in X$ by $x+g$. In other words $G$ acts on $X$ by translations.
	
    \begin{figure}[h]
    	\centering
    	\includegraphics[width=0.6\textwidth]{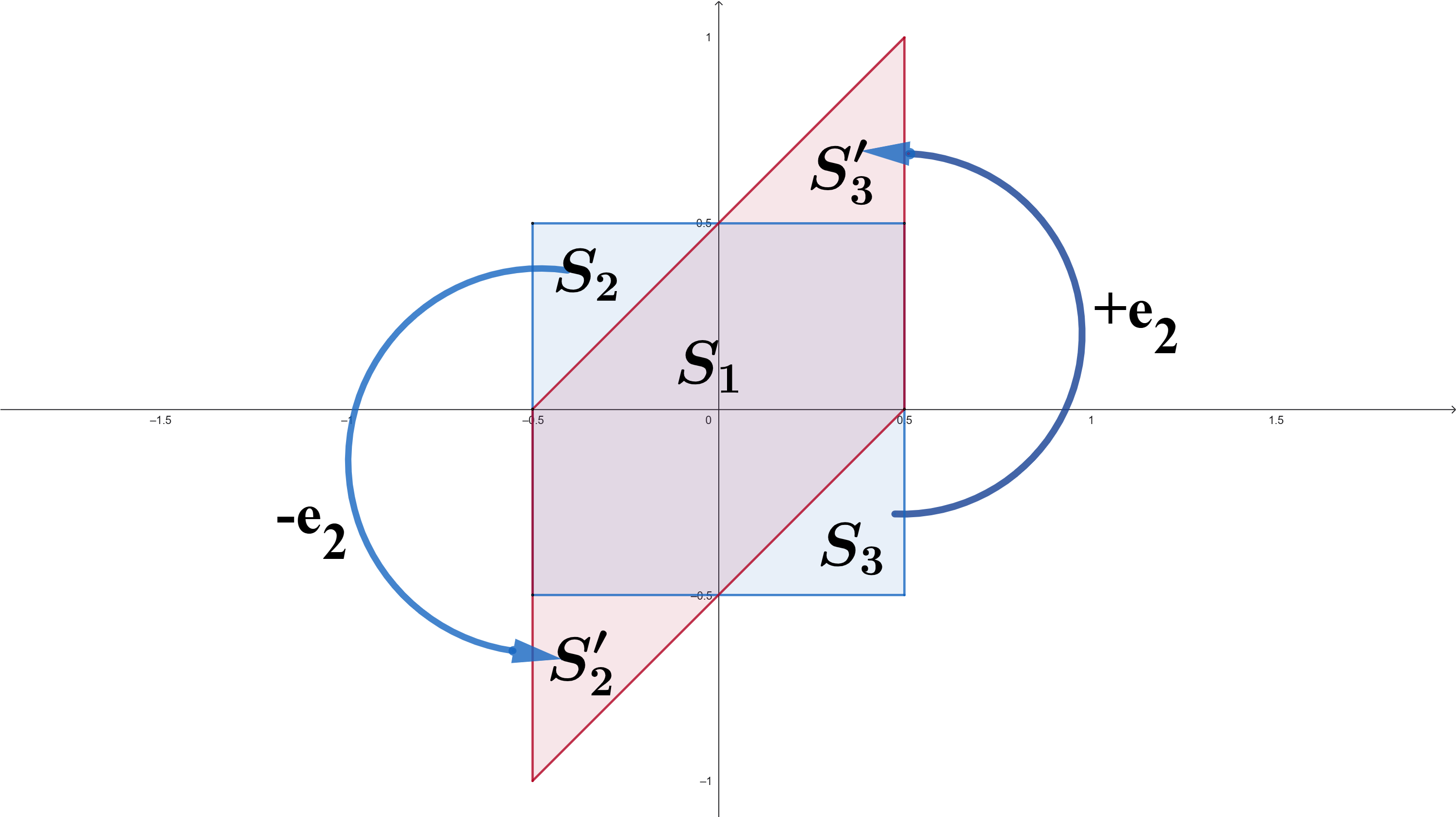}
    	\caption{An example of two $\ZZ^2$- equidecomposable sets, $P_1= [-\frac{1}{2}, \frac{1}{2})^2$ appears in blue and $P_2=\begin{pmatrix}
    			1 & 0\\
    			1 & 1
    		\end{pmatrix}\left[-\frac{1}{2}, \frac{1}{2}\right) ^2$ in red. The disjoint pieces $S_1 =P_1 \cap P_2,S_2,S_3$ partition $P_1$ and $S_1, S_2'=S_2 -e_2, S_3 '=S_3 +e_2$ partitions $P_2$ where $e_2=(0,1)\in \ZZ^2$.}
    	\label{fig:myimage}
    \end{figure}
    The following simple lemma clarifies the connection between the notion of equidecomposability and the existence of a bounded common fundamental domain for two full-rank lattices on $\RR^d$.
    \begin{lemma}\label{eq}
    Let $L, M\subseteq \RR^d$ be two full-rank lattices, and let $P_L$ and $P_M$ be fundamental parallelepipeds of $L$ and $M$, respectively.

    Then, $L$ and $M$ admit a bounded common fundamental domain on $\RR^d$ if and only if
    \begin{equation*}
        P_L\overset{L+M}{\sim}P_M
    \end{equation*}
    in $\RR^d$. Moreover, if

    \begin{equation*}
        P_L \overset{L+M}{\sim} P_M
    \end{equation*}
with Lebesgue measurable pieces in $\RR^d$ then $L,M$ admit a bounded and Lebesgue \textbf{measurable} common fundamental domain on $\RR^d$.
\end{lemma}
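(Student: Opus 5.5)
The plan is to prove both directions by transporting pieces between $P_L$, $P_M$ and a common domain $F$. All translations will be by elements of $L$ or $M$, hence of $L+M$. The key observation is that any fundamental domain $D$ of $L$ is $L$-equidecomposable with $P_L$ via the partition $D_\ell = D\cap(P_L+\ell)$, $\ell\in L$. The sets $D_\ell-\ell$ then partition $P_L$: each coset has exactly one representative in $D$ and exactly one in $P_L$, and these differ by a unique $\ell$. If $D$ is bounded, only finitely many $\ell$ satisfy $D\cap(P_L+\ell)\neq\emptyset$, because $P_L$ is bounded and $L$ is discrete. So the decomposition is finite. If $D$ is also measurable, then every piece $D\cap(P_L+\ell)$ is measurable.

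\emph{($\Rightarrow$)} Let $F$ be a bounded common fundamental domain. By the observation, $F\overset{L}{\sim}P_L$ and $F\overset{M}{\sim}P_M$, and so both are $(L+M)$-equidecompositions. Since equidecomposability is an equivalence relation (transitivity by refining the two partitions), we get $P_L\overset{L+M}{\sim}P_M$.

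\emph{($\Leftarrow$)} Suppose $P_L=\bigsqcup_{j\le n}A_j$ and $P_M=\bigsqcup_{j\le n}(A_j+g_j)$ with $g_j\in L+M$. Write $g_j=\ell_j+m_j$ with $\ell_j\in L$, $m_j\in M$, and set
\begin{equation*}
F:=\bigsqcup_{j\le n}(A_j-\ell_j)=\bigsqcup_{j\le n}(A_j+g_j-\ell_j-m_j) .
\end{equation*}
First, $F$ is a finite union of translates of bounded sets, so it is bounded.

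Next, $F$ is a fundamental domain of $L$. Translating each piece $A_j$ of the partition of $P_L$ by an element of $L$ preserves the coset of every point. Hence each $L$-coset still meets $F$, and it meets $F$ at most once provided the translated pieces remain pairwise disjoint.

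Symmetrically, $F$ is obtained from the partition $\{A_j+g_j\}$ of $P_M$ by translating each piece by $-m_j\in M$. So the same argument shows $F$ is a fundamental domain of $M$, again provided disjointness holds.

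The main obstacle is exactly this disjointness. Two pieces $A_i-\ell_i$ and $A_j-\ell_j$ could overlap even though $A_i$ and $A_j$ are disjoint. What I would try first is to refine the partition so that disjointness becomes automatic, and the refinement uses $P_M$ being a fundamental domain of $M$. Suppose $x\in A_i$, $y\in A_j$, $i\neq j$, and $x-\ell_i=y-\ell_j$. Then $x\equiv y \pmod L$, and since $P_L$ is a fundamental domain of $L$, this forces $x=y$. That contradicts $i\neq j$, so overlaps are in fact impossible. The $M$-side is handled the same way, with $A_i+g_i$ and $A_j+g_j$ both inside $P_M$ and congruent mod $M$, forcing equality. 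So defining $F$ as the union rather than presupposing disjointness, the fundamental-domain property of $P_L$ and $P_M$ gives exactly one representative per coset for each lattice.

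Finally, for the measurable statement: if the $A_j$ are Lebesgue measurable, then $F$ is a finite union of translates of measurable sets, hence measurable.
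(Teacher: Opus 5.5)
Your proof follows the paper's in both directions. The pieces $D\cap(P_L+\ell)$ are the paper's $S_l+l$, and in the converse you translate the pieces of $P_L$ by the $L$-components of the $g_j$, getting disjointness from $P_L$ being an $L$-fundamental domain. The one real problem is a sign error in your displayed definition of $F$, and as written it breaks the $M$-half.

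Since $g_j=\ell_j+m_j$, the right-hand side $A_j+g_j-\ell_j-m_j$ is just $A_j$, so the display is not an identity. More importantly, $A_j-\ell_j=(A_j+g_j)-(2\ell_j+m_j)$, and $2\ell_j+m_j$ is in general not in $M$. So your sentence that $F$ arises from the partition $\{A_j+g_j\}$ of $P_M$ by translations $-m_j\in M$ is false for $\bigsqcup_j(A_j-\ell_j)$, and that set need not be a fundamental domain of $M$.

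The set your verbal arguments actually describe is
\[
F=\bigsqcup_{j\le n}(A_j+\ell_j)=\bigsqcup_{j\le n}\bigl((A_j+g_j)-m_j\bigr),
\]
which is exactly the paper's $F=\bigcup_i(S_i+l_i)$. With this sign, your disjointness check reads $x+\ell_i=y+\ell_j\Rightarrow x\equiv y\pmod L\Rightarrow x=y$. Your $M$-side argument is also now valid: two points of $F$ congruent mod $M$ come from two points of $P_M$ congruent mod $M$, because each piece is moved by $-m_j\in M$. The boundedness and measurability remarks go through unchanged.
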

\begin{proof}
    We start with the proof of the “only if” direction. Let $F$ be a bounded common fundamental domain of $ L$ and $ M$. We will show that $P_L \overset{L+M}{\sim} F$ and similarly, one can obtain that $P_M\overset{L+M}{\sim}F$, for any fundamental parallelepipeds $P_L,P_M$ of $L,M$ respectively. Then, since the equidecompasability relation with respect to the group $L+M$ is an equivalence relation, we get that:
    \begin{equation*}
        P_L\overset{L+M}{\sim}P_M
    \end{equation*}
    and thus the proof of this direction will be complete. To this end, since $L,M\subseteq L+M$, it is enough for us to show that $P_L \overset{L}{\sim} F$; similarly, one can show that $P_M \overset{M}{\sim} F$. Define the set:
    \begin{equation*}
        A= \{ l\in L\: | \: x+l \in F, \: \text{for some} \: x\in P_L\}
    \end{equation*}
    and notice that since $F, P_L$ are bounded sets so is $A\subseteq L$ and hence $A$ is finite. For every $l\in A$ define the set:
    \begin{equation*}
        S_l = \{ x\in P_L \: | \: x+l \in F\}
    \end{equation*}
    and observe that since $F$ is a fundamental domain of $L$, the collection $\{S_l \}_{l\in A}$ forms a finite partition of $P_L$. On the other hand, since $P_L$ is a fundamental domain of $L$, the collection $\{S_l +l\}_{l\in A}$ forms a finite partition of $F$ and thus we obtain:
    \begin{equation*}
        P_L \overset{L}{\sim} F
    \end{equation*}
    as promised.

    As for the proof of the "if" direction, from $P_L \overset{L+M}{\sim} P_M$, we get that there exists a partition of $P_L$,
    \begin{equation}\label{P_L}
        P_L = \bigcup_{i\le N} S_i
    \end{equation}
    for some $N\in \NN$ and elements of $L+M$, $l_i +m_i$ where $l_i \in L$, $m_i \in M$ for every $i\le N$, such that
    \begin{equation*}
        P_M = \bigcup_{i\le N} S_i + l_i +m_i
    \end{equation*}
    is a partition of $P_M$.

    Define the set:
    \begin{equation*}
    F= \bigcup_{i\le N} S_i + l_i    
    \end{equation*}
    and notice that since $P_L$ is a fundamental domain  of $L$, the pieces $S_i +l_i$, for $i\le N$ are pairwise disjoint. Observe that $F$ is a bounded set as a finite union of translated bounded pieces $S_i \subseteq P_L$. $F$ is $L$ equidecomposable with $P_L$ and $M$ equidecomposable with $P_M$ and hence it is a bounded common fundamental domain of $L,M$. Finally, if the pieces $S_i$ in (\ref{P_L}) are measurable, then $F$ is also measurable.   
\end{proof}
\begin{figure}[H]
  \centering
  \begin{minipage}{0.45\textwidth}
    \centering
    \includegraphics[width=\textwidth]{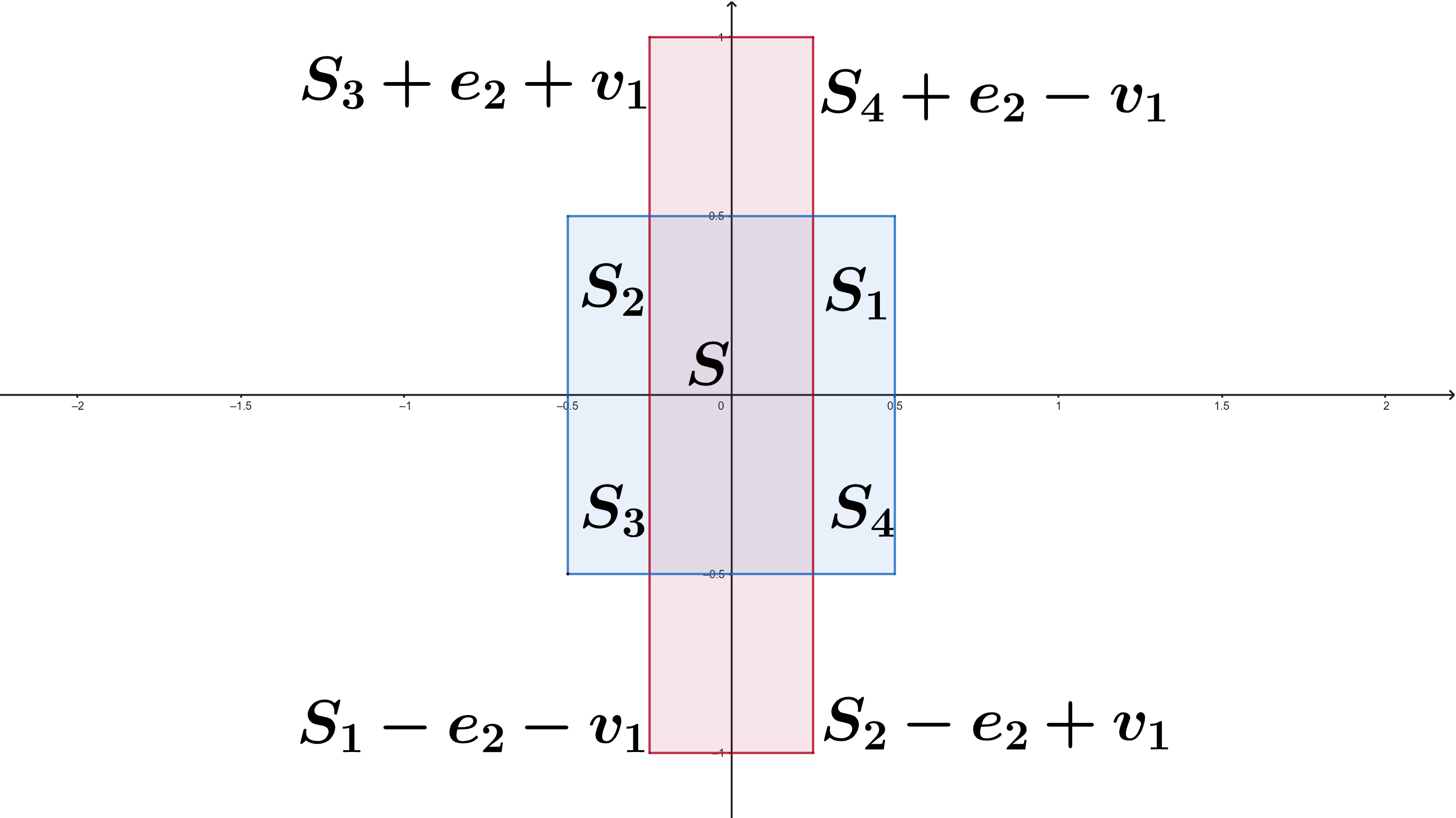}
    \caption{
    $L+M$ equidecomposition of $P_L$ (in blue), $P_M$ (in red) when $L=\ZZ^2$ and
    $M= <(1/2,0), (0,2)>_\ZZ$.}
    \label{PLPM}
  \end{minipage}
  \hfill
  \begin{minipage}{0.45\textwidth}
    \centering
    \includegraphics[width=\textwidth]{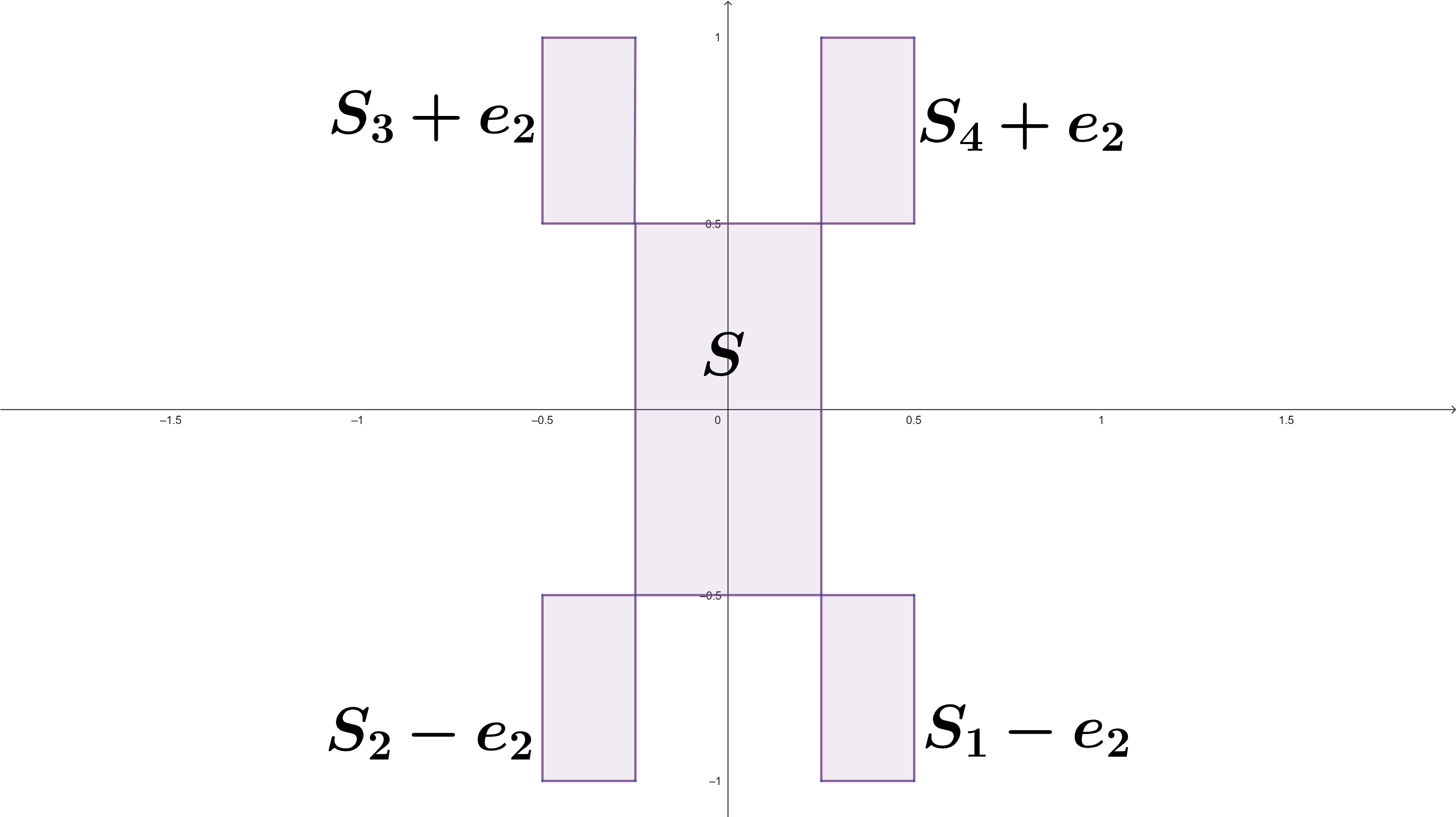}
    \caption{The bounded common fundamental domain $F$ of $L$,$M$ in Figure \ref{PLPM}.}
    \label{F}
  \end{minipage}
\end{figure} 
\subsection{Equidecomposability up to measure zero and almost-fundamental domains}
We now give another definition of equidecomposition when the space $X$ is also a measurable space.
\begin{definition}[Equidecomposition up to measure zero]\label{eq0}
    Let $(X,\mu)$ be a measurable space, either finite or infinite, endowed with a measure preserving action of a \textit{countable} group $G$. We say that two measurable sets  $A, B\subseteq X$ are $G$-equidecomposable up to measure zero, if there exist finitely many sets $A_ 1,.. ,A_n\subseteq X$, elements $g_1,..,g_n\in G$ and a full measure subset $X'\subseteq X$, such that $\{A_j \cap X'\}_{j\le n}$ forms a partition of $A\cap X'$ while $\{(A_j +g_j)\cap X'\}_{j\le n}$ forms a partition of $B\cap X'$. If the sets $A_1,..., A_n$ can be chosen measurable, then we say that $A, B$ are $G$-almost equidecomposable. 
\end{definition}

It is clear that if two measurable sets $A, B$ are $G$-equidecomposable with the Definition \ref{equidecomposition}, then they are $G$- equidecomposable up to measure zero. Notice also that equidecomposability up to measure zero, with respect to an action $G$ on a measurable space $X$, is an equivalence relation.

One can verify that any two measurable and bounded fundamental domains of a full-rank lattice $L$ are $L$-equidecomposable up to measure zero in $\RR^d$ as we showed in Lemma \ref{eq}. However, the converse is not true. For example, one may see that if $P_L$ is a fundamental parallelepiped of $L$ and $\overline{P_L}$ is the closure of $P_L$, then $P_L,\overline{P_L}$ are $L$-equidecomposable up to (Lebesgue) measure zero in $\RR^d$, since the boundary of $P_L$ is a null set, but $\overline{P_L}$ is not a fundamental domain of $L$.

Even though sets that are $L$- almost equidecomposable, with a measurable fundamental domain of $L$, are not in general fundamental domains of $L$, they tile $\RR^d$ by translations of $L$. In other words if $F$ is $L$-almost equidecomposable with a fundamental domain $F'$ of $L$, then:
\begin{equation}\label{tiling}
    \sum_{l\in L} 1_{F} (x-l)=1
\end{equation}
for almost every $x\in \RR^d$. To see this, assume that $F'\cap X= \bigcup_{i\le n} S_i\cap X$ such that $F\cap X= \bigcup_{i\le n} (S_i+l_i)\cap X$ for some full measure subset $X\subseteq \RR^d$, measurable sets $S_i$ and elements of $L$, $l_i$ for $i\le n$, where $n\in \NN$. Since $X$ is a full measure set on $\RR^d$ we have:  $1_{F'}= 1_{\bigcup_{i\le n} S_i}$ and $1_{F} = 1_{\bigcup{i\le n} \: \: \: S_i+l_i}$, $m_d$- a.e. on $\RR^d$, where $1_{F}$ is the indicator function of $F$ and $m_d$ the Lebesgue measure on $\RR^d$. Furthermore, since $S_i\cap X$ for $i\le n$, are pairwise disjoint and $X$ is a full measure set in the sense that $m_d(\RR^d\setminus X)=0$, it follows that $S_i $ for $i\le n$ are pairwise disjoint in measure (i.e., $m_d(S_i\cap S_j)=0$ for $i\neq j$) and so $1_{F'}= \sum_{i\le n} 1_{S_i}$, a.e. on $\RR^d$. Similarly, we have that $1_{F}= \sum_{i\le n} 1_{S_i +l_i} $ a.e on $\RR^d$. Finally since $F'$ is a measurable fundamental domain of $L$ and $L$ is a countable subgroup of $\RR^d$, we have for almost every $x\in \RR^d$:
\begin{equation*}
    1=\sum_{l\in L} 1_{F'} (x-l)=\sum_{l\in L} \sum_{i\le n} 1_{S_i}(x-l)=\sum_{l\in L}\sum_{i\le n} 1_{S_i +l_i} (x-(l+l_i))= \sum_{i\le n}\sum_{l\in L}1_{S_i+l_i}(x-l)=\sum_{l\in L}1_F (x-l) 
\end{equation*}

Measurable sets that tile $\RR^d$ by translations of a full-rank lattice $L\subseteq \RR^d$ are called \textit{almost-fundamental domains} of $L$. An elaboration of the arguments used in Lemma \ref{eq} gives us a sufficient condition for the existence of a bounded common, almost-fundamental domain of two full-rank lattices:
\begin{lemma}\label{Afd}
    Let $L, M\subseteq \RR^d$ be two full-rank lattices, and let $P_L$ and $P_M$ be fundamental paralllelepipeds of $L$ and $M$, respectively.

    If $P_L, P _M$ are $L+M$ equidecomposable up to measure zero, with measurable pieces on $\RR^d$ then there exists a bounded common almost-fundamental domain of $L, M$ in $\RR^d$.
\end{lemma}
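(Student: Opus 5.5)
I will mimic the "if" direction of Lemma \ref{eq}, with all equalities now holding only off a null set. By hypothesis there are measurable sets $S_1,\dots,S_N$, elements $l_i+m_i\in L+M$ with $l_i\in L$, $m_i\in M$, and a full-measure set $X\subseteq\RR^d$ such that $\{S_i\cap X\}_{i\le N}$ partitions $P_L\cap X$ and $\{(S_i+l_i+m_i)\cap X\}_{i\le N}$ partitions $P_M\cap X$. Since $L+M$ is countable, I first shrink $X$ to
\begin{equation*}
X'=\bigcap_{g\in L+M}(X+g).
\end{equation*}
This is still of full measure, being a countable intersection of full-measure sets, and it is invariant under translations by $L+M$. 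I may also replace each $S_i$ by $S_i\cap P_L$, which is still measurable. With these reductions the partition statements keep holding on $X'$, and translating by any element of $L+M$ preserves membership in $X'$.

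Next I define the candidate set
\begin{equation*}
F=\bigcup_{i\le N}(S_i+l_i).
\end{equation*}
It is measurable as a finite union of translates of measurable sets. It is bounded because each $S_i\subseteq P_L$ and there are finitely many $l_i$.

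To see that $F$ is an almost-fundamental domain of $L$, I note that $\{S_i\cap X'\}$ partitions $P_L\cap X'$, so $P_L$ and $F$ are $L$-almost equidecomposable using the translations $l_i$. Since $X'$ is $L$-invariant, the pieces $(S_i+l_i)\cap X'=(S_i\cap X')+l_i$ are pairwise disjoint: their images modulo $L$ are disjoint subsets of the fundamental domain $P_L$. Hence $\{(S_i+l_i)\cap X'\}$ partitions $F\cap X'$. The computation leading to (\ref{tiling}) then gives $\sum_{l\in L}1_F(x-l)=1$ for almost every $x$.

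For $M$, I set $T_i=S_i+l_i$ and observe that $T_i+m_i=S_i+l_i+m_i$. So $\{(T_i+m_i)\cap X'\}$ partitions $P_M\cap X'$, and the $T_i\cap X'$ are disjoint as shown above. Thus $F$ and $P_M$ are $M$-almost equidecomposable with pieces $T_i$ and translations $m_i$, and the same argument, with $P_M$ as the reference fundamental domain, gives tiling by $M$ almost everywhere.

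The only delicate point I expect is justifying that the translated pieces remain disjoint and cover correctly modulo null sets. That is exactly why I pass to the $(L+M)$-invariant full-measure set $X'$: once every statement is made on $X'$, the argument of Lemma \ref{eq} goes through verbatim, and the tiling identity (\ref{tiling}) already handles the passage from almost equidecomposability to almost-fundamental domains.
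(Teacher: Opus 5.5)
Your proof is correct and follows the argument the paper intends. The paper gives no standalone proof of this lemma: it describes it as an elaboration of Lemma \ref{eq}, namely take $F=\bigcup_i(S_i+l_i)$, check that it is $L$-almost equidecomposable with $P_L$ and $M$-almost equidecomposable with $P_M$, and apply the computation behind (\ref{tiling}), which is what it does explicitly for $F_1$ in the intermediate case of Theorem \ref{BMF}. Two of your steps fill points the paper leaves implicit: passing to the $(L+M)$-invariant full-measure set $X'$ makes the disjointness of the translated pieces rigorous, and replacing $S_i$ by $S_i\cap P_L$ guarantees that $F$ is bounded.
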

\begin{figure}[h]
	\centering
	\begin{minipage}{0.45\textwidth}
		\centering
		\includegraphics[width=\textwidth]{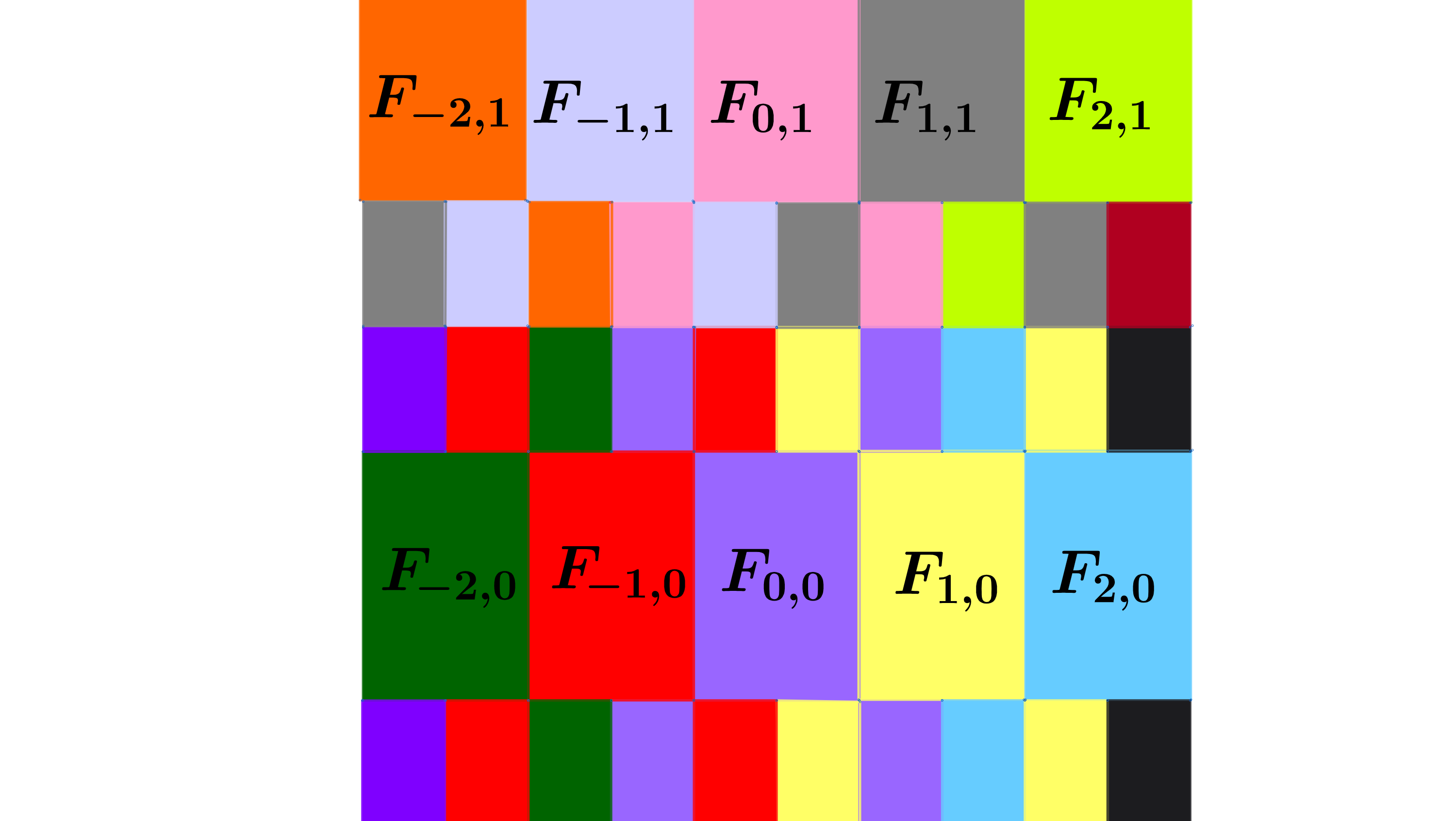}
	\end{minipage}
	\begin{minipage}{0.45\textwidth}
		\centering
		\includegraphics[width=\textwidth]{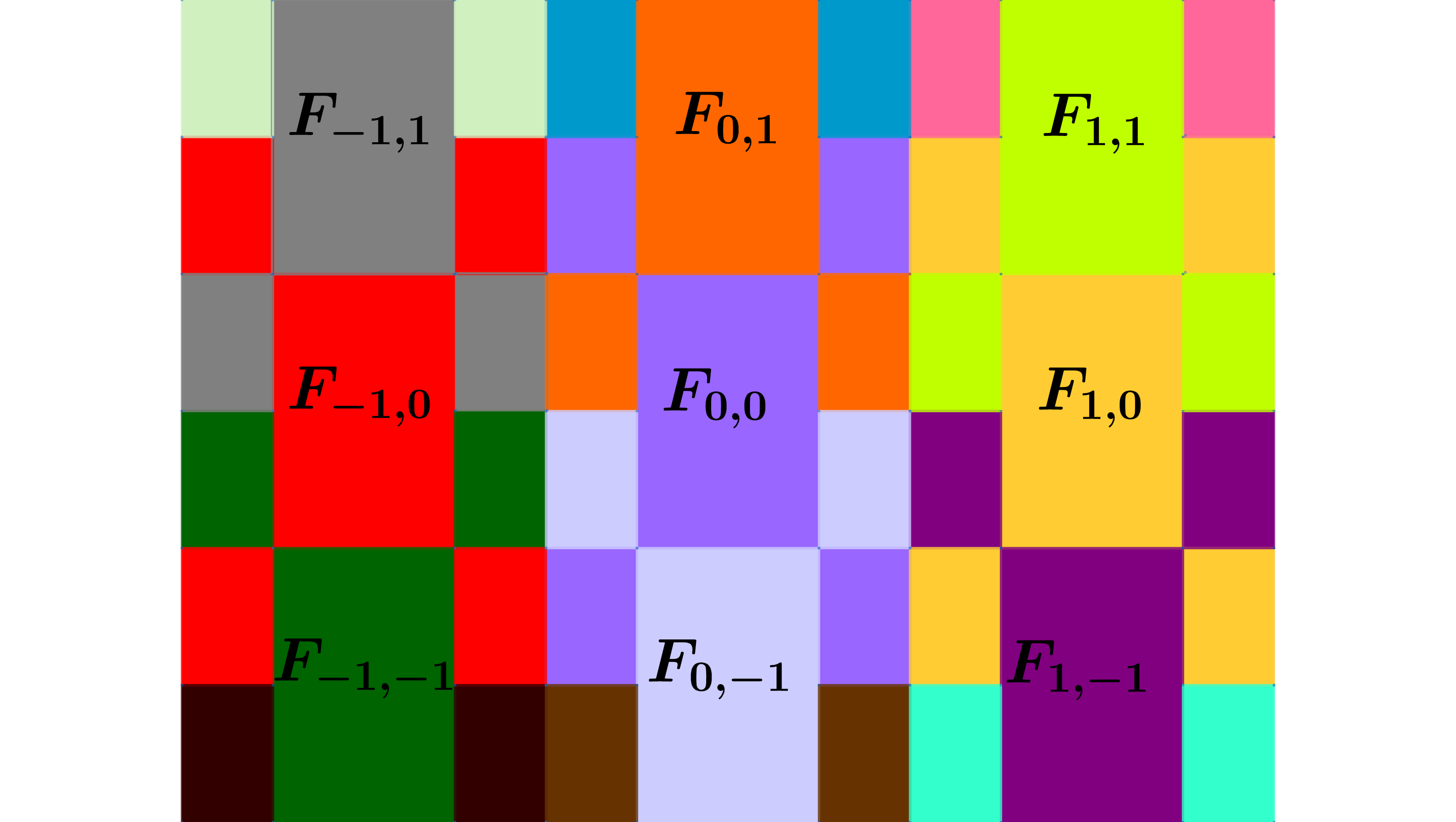}
	\end{minipage}
	\caption{The set $F$ in Figure \ref{F} tiles $\RR^2$ by translations of $M= <(1/2,0),(0,2)>_\ZZ$ (left) and by translations of $L=\ZZ^2$ (right), as a common fundamental domain of $L,M$. In the figure to the left $F_{n,m}=F + n(1/2, 0)+ m(0,2)$ and to the right $F_{n,m}= F + n(1,0)+ m(0,1)$, for $n,m\in \ZZ$.}
\end{figure}

Finally, we state the following result in \cite{Kol97}, which gives us a sufficient condition for the existence of a measurable common fundamental domain of two full-rank lattices $L,  M$, if we have a common almost-fundamental domain of $ L, M$:
\begin{theorem}[{\cite[Theorem 3]{Kol97}}]\label{MeasurableFD}
    If two full-rank lattices $L, M\subseteq\RR^d$ have a not necessarily measurable, common fundamental domain $\Omega'$ and a measurable common almost-fundamental domain $\Omega''$, then they have a Lebesgue measurable common fundamental domain $\Omega\subseteq \Omega'\cup \Omega''$ in $\RR^d$.

    If $\Omega',\Omega''$ are bounded sets, then $\Omega$ is also a bounded set.
\end{theorem}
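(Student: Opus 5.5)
The plan is to modify $\Omega''$ only on a null set that is invariant under both lattices, and to use $\Omega'$ as a patch on that null set. Since Lebesgue measure is complete, any subset of a null set is measurable, so the non-measurability of $\Omega'$ causes no harm once it is confined to such a set.

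First I define the exceptional sets where $\Omega''$ fails to tile:
\begin{equation*}
Z_L=\Bigl\{x\in\RR^d \,:\, \sum_{l\in L}1_{\Omega''}(x-l)\neq 1\Bigr\},\qquad Z_M=\Bigl\{x\in\RR^d \,:\, \sum_{m\in M}1_{\Omega''}(x-m)\neq 1\Bigr\}.
\end{equation*}
Each is measurable, because it is defined through a countable sum of measurable functions, and each is null because $\Omega''$ is an almost-fundamental domain of $L$ and of $M$ (see (\ref{tiling})). Next I saturate under the countable group $L+M$:
\begin{equation*}
N=(Z_L\cup Z_M)+(L+M)=\bigcup_{g\in L+M}\bigl((Z_L\cup Z_M)+g\bigr).
\end{equation*}
This is a countable union of null sets, so $N$ is null and measurable. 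It is invariant under translation by $L$ and by $M$, so $N$ is a union of full $L$-cosets and also a union of full $M$-cosets. Securing this simultaneous invariance is the one point needing care: $Z_L$ alone is $L$-invariant but need not be $M$-invariant, and saturating by $L+M$ fixes this.

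Now set
\begin{equation*}
\Omega=(\Omega''\setminus N)\cup(\Omega'\cap N).
\end{equation*}
The first piece is measurable and the second is a subset of a null set, hence Lebesgue measurable. Clearly $\Omega\subseteq\Omega'\cup\Omega''$, so $\Omega$ is bounded whenever $\Omega'$ and $\Omega''$ are.

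It remains to check that $\Omega$ is a fundamental domain of $L$; the argument for $M$ is identical. Take a coset $x+L$, which lies either entirely in $N$ or entirely in $\RR^d\setminus N$.

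If $x+L\subseteq N$, then $\Omega\cap(x+L)=\Omega'\cap(x+L)$, which is a single point because $\Omega'$ is a fundamental domain of $L$.

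If $x+L$ is disjoint from $N$, then $x\notin Z_L$, so exactly one $l\in L$ satisfies $x-l\in\Omega''$. That point lies outside $N$, so $\Omega\cap(x+L)=\Omega''\cap(x+L)$ is again a single point.

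Hence $\Omega$ is a bounded, measurable common fundamental domain of $L$ and $M$ contained in $\Omega'\cup\Omega''$.
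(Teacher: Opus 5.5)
Your proof is correct and takes essentially the same approach: the paper does not reprove this theorem but quotes \cite[Theorem 3]{Kol97}, and that argument likewise keeps the measurable almost-tile $\Omega''$ off the $(L+M)$-saturation of its null exceptional set and replaces it by $\Omega'$ on that set. This is exactly the construction that gives $\Omega\subseteq\Omega'\cup\Omega''$, and hence boundedness, which the paper points out in its introduction.
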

\subsection{The measurable Hall's Theorem}
Here we state the measurable Hall's Theorem proved by T.Cieśla and M.Sabok \cite{CS22}, which gives conditions guaranteeing equidecomposition of two measurable sets with measurable pieces.

Assume $(X,\mu)$ is a standard Borel probability space, endowed with a free pmp (probability measure preserving) action of a finitely generated group $G$. We recall that the action of $G$ on $X$ is called free if $g+x\neq x$ for every nontrivial
element $g\in G$ and every $x\in X$.

We remind to the reader that the rank of a finitely generated abelian group $G$ is the minimal number of generators that produce $G$. By the structure theorem for finitely generated abelian groups, we may assume
that $G=\ZZ^d \times \Delta$ where $d$ is the rank of $G$ and $\Delta$ is a finite abelian group. 

\begin{definition}[see \cite{CS22}, Definition 5]
A measurable set $A\subseteq X$ is called $G$-uniform if there exist positive constants $c$ and $n_0$, such that for almost every $x\in X$
and for every $n > n_0$ we have $|A\cap(F_n + x)|\ge cn^d$
, where $F_n := \{0,1,...,n-1\}^d\times \Delta$.
\end{definition}
The following lemma gives us a sufficient condition  for a measurable set to be $G$- uniform:

\begin{lemma}\label{Guni}
    Let $X\subseteq \RR^d$ be a locally compact abelian group and let $\mu$ be a Borel probability measure on $X$ that is translation-invariant (i.e., $\mu$ is the normalized haar measure on $X$). Let $G\subseteq X$ be a finitely generated dense subgroup of $X$ of rank $r>0$, acting on $X$ by translations.

    Then any Borel measurable set $A\subseteq X$ with nonempty interior is a $G$-uniform set in $X$.
\end{lemma}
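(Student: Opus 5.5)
Since $A$ has nonempty interior, the plan is to reduce to an open neighbourhood $U$ of $0$ with $x+U\subseteq A$ for some $x$. Because $G$ acts by translations, which are $\mu$-preserving homeomorphisms, uniformity of $x+U$ will follow from uniformity of $U$ after shifting the base point. Indeed, $|(x+U)\cap(F_n+y)| = |U\cap(F_n+(y-x))|$, so a uniform bound for $U$ over all base points $y\in X$ transfers directly. It therefore suffices to show: there are $c>0$ and $n_0$ such that for \emph{every} $y\in X$ and every $n>n_0$,
\begin{equation*}
|\{g\in F_n : y+g\in U\}|\ge c n^r .
\end{equation*}
Here I write $G\cong\ZZ^r\times\Delta$ and $F_n=\{0,\dots,n-1\}^r\times\Delta$. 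The counting is done in $G$, meaning $|A\cap(F_n+y)|$ counts group elements $g\in F_n$ with $y+g\in A$. This coincides with counting points since the action is free, as $G\subseteq X$ acts by translation.

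The key step is a compactness-plus-density argument. First I choose a smaller open neighbourhood $V$ of $0$ with $V-V\subseteq U$. Since $\mu$ is a probability Haar measure, $X$ is compact. By density of $G$, the translates $\{V-g: g\in G\}$ cover $X$, so finitely many $V-g_1,\dots,V-g_k$ cover $X$. Each $g_j$ has coordinates in $\ZZ^r\times\Delta$; I let $K$ bound the absolute values of their $\ZZ^r$-coordinates. Now take any $y\in X$ and any $h\in G$. The point $y+h$ lies in some $V-g_j$, so $y+h+g_j\in V\subseteq U$, provided $0\in V$ is arranged so that $V\subseteq V-V\subseteq U$. Thus every translate $h+B$ of the finite ``window'' $B=\{g_1,\dots,g_k\}$ contains an element $g$ with $y+g\in U$.

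Next I tile $F_n$ by disjoint windows. I take $h$ ranging over $(2K+1)\ZZ^r\times\{0\}$ with $h+B$ shifted into the box. To be precise, I replace $g_j$ by $g_j+(K,\dots,K,0)$ so that $B\subseteq\{0,\dots,2K\}^r\times\Delta$. This is harmless, since it is the same as translating $V$, and I can instead cover by $V-g_j-(K,\ldots,K)$, each of which is again a translate. Then the sets $h+B$ for $h\in (2K+1)\{0,\dots,\lfloor n/(2K+1)\rfloor-1\}^r$ are pairwise disjoint subsets of $F_n$. Each contains at least one hit, giving at least $\lfloor n/(2K+1)\rfloor^r\ge c n^r$ hits for $n>n_0$, with $c=(2(2K+1))^{-r}$ and $n_0=2(2K+1)$. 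This holds for every $y$, hence almost every $y$, so $U$, and therefore $A$, is $G$-uniform.

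The main obstacle I anticipate is bookkeeping rather than ideas. The hits from different windows must be distinct group elements, which the disjointness of the windows guarantees. The definition's exponent $d$ must be read as the rank $r$ of $G$, the dimension of the $\ZZ^r$ part of $F_n$. Compactness of $X$ also needs justifying, and it follows because a locally compact group whose Haar measure is finite is compact. The hypothesis $r>0$ only ensures the exponent is meaningful; a dense finite subgroup would force $X$ finite, and the argument works regardless.
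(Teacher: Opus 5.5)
Your proof is correct and follows essentially the same route as the paper's. Density plus compactness gives a finite ``net'' of group elements (the paper's box $P_{k_0}$, your window $B$), and $F_n$ contains on the order of $n^r$ disjoint translates of it, each of which must hit a fixed neighbourhood inside $A$. Your version is slightly more careful: letting the window range over all of $\ZZ^r\times\Delta$ and shifting it by $(K,\dots,K)$ avoids two steps the paper asserts without proof, namely that the free part of $G$ is itself dense (which can fail for a poorly chosen complement when $X$ is disconnected, as in the intermediate case) and that boxes with only nonnegative coefficients already form a net.
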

\begin{proof}
By the structure theorem for finitely generated abelian groups, there exists a
direct sum decomposition $G= M \oplus \Delta$ where $M$ is a free abelian group of positive rank $r$ and $\Delta$ is a finite group. Since $G$ is dense in $X$, then it follows that $M$ is also dense in $X$ (see \cite{Ru62} Section 2.1). Let $m_1,..,m_r$ be a set of generators of $M$ and denote for every $k\in \NN$:
\begin{equation*}
    F_k = P_k \oplus \Delta, \: \: \: P_k =\left\{\sum_{i\le r} n_im_i \: | \: n_i \in \{ 0,...,k-1\}, i\le r \right\}
\end{equation*}
    For a measurable set $A\subseteq X$ to be $G$ uniform on $X$ it suffices to show the existence of a positive constant $c>0$ and $k_0\in \NN$ such that for every $k\ge k_0$ and every $x\in X$:
    \begin{equation*}
        |A\cap F_k +x|\ge c k^r
    \end{equation*}

    Since $A$ has nonempty interior, there exists a ball $B$ of radius $2\epsilon$ for some $\epsilon>0$, contained in $A$. Since $M$ is dense in $X$, there is a positive integer $k_0$ such that the set $P_{k_0}$ forms an $\epsilon$-net in $X$. This implies that any translation of $P_{k_0}$ is also an $\epsilon$-net in $X$. Now observe that for every $x \in X$ and every $k > k_0$, the set $P_k + x$ contains at least $\Floor{k/k_0}^r$ disjoint translated copies of $P_{k_0}$, each of which must intersect the ball $B$. Hence, for every $x\in X$ and every $k> k_0$:
    \begin{equation*}
        |A\cap F_k +x| \ge |B\cap P_k +x| \ge \Floor{k/k_0}^r\ge ck^r
    \end{equation*}
    so $A$ is a $G$-uniform set on $X$.
\end{proof}
We now state the measurable Hall's theorem of T. Cieśla  and M. Sabok:
\begin{theorem}[{\cite[Theorem 2]{CS22}}]\label{CS22}
Let $(X, \mu)$ be a standard Borel probability space,
endowed with a free pmp action of a finitely generated abelian group $G$, and let $A, B\subseteq X$ be two measurable $G$-uniform sets. Then the following conditions are equivalent:

(a) $A$ and $B$ satisfy Hall’s condition a.e. with respect to $G$;

(b)$A$ and $B$ are $G$-equidecomposable up to measure zero (with possibly non-measurable pieces);

(c) $A$ and $B$ are $G$- almost equidecomposable.
\end{theorem}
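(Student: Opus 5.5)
The three equivalences go in the order (c)$\Rightarrow$(b)$\Rightarrow$(a)$\Rightarrow$(c). The first two implications are formal, and all the work is in (a)$\Rightarrow$(c). Since this is the theorem of \cite{CS22}, I only indicate the route I would take, following the strategy of measurable matchings in Borel graphs.

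\emph{(c)$\Rightarrow$(b)} is immediate from Definition \ref{eq0}.

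\emph{(b)$\Rightarrow$(a).} Suppose pieces $A_1,\dots,A_n$ and elements $g_1,\dots,g_n$ realize the equidecomposition on a full-measure $G$-invariant set $X'$. We may shrink $X'$ to $\bigcap_{g\in G}(X'+g)$, which is allowed because $G$ is countable. Put $S=\{g_1,\dots,g_n\}$. On each orbit $x+G\subseteq X'$, the map $a\mapsto a+g_j$ for $a\in A_j$ is a bijection from $A\cap(x+G)$ onto $B\cap(x+G)$ that moves every point by an element of $S$. Hence every finite $F\subseteq A\cap(x+G)$ satisfies $|(F+S)\cap B|\ge|F|$, and symmetrically with $-S$. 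This is exactly Hall's condition a.e. with respect to the bounded set $S$.

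\emph{(a)$\Rightarrow$(c).} This is the main obstacle. Fix the finite set $S\subseteq G$ from Hall's condition. Consider the bipartite Borel graph $\mathcal{G}$ on $A\sqcup B$ joining $a$ to $a+s$, $s\in S$, whenever $a+s\in B$. It is locally finite and of bounded degree. By the classical Hall theorem (with K\H{o}nig's compactness argument for infinite locally finite graphs), each orbit carries a perfect matching, but choosing them measurably is the whole difficulty.

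I would build the matching as a limit of measurable partial matchings $M_1\subseteq M_2\subseteq\cdots$ (after possibly rematching), improved along augmenting paths. The step I expect to be hardest is to show that augmenting paths of length $O(k)$ exist and can be found Borel-measurably. Here the uniformity of $A$ and $B$, together with the polynomial growth of $G\cong\ZZ^d\times\Delta$, gives an isoperimetric margin. Concretely, a box $F_n+x$ contains $\gtrsim n^d$ points of $A$ and of $B$, while its boundary has size $O(n^{d-1})$. So after enlarging $S$ to $S^{(k)}=S+F_k$, Hall's condition holds with slack proportional to the volume. That slack lets one push unmatched mass through a measurable family of disjoint boxes (a ``toast'' structure, which exists for free actions of $\ZZ^d$ by the Gao--Jackson / Marks--Unger tiling techniques).

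Inside each box I would solve a finite flow or matching problem, canonically chosen (for instance lexicographically least), so that the choice is Borel. Boxes at successive scales are nested so that the matchings stabilize; the set of points whose partner changes infinitely often has measure bounded by $\sum_k O(1/k)\mu(\partial\text{-layers})$, and we arrange this to be $0$ by choosing scales growing fast enough. The resulting a.e.\ defined matching moves points by elements of a possibly growing set, so a final step is needed to return to finitely many translations. For this I would use a bounded-displacement rounding of the limiting fractional flow, in the manner of Grabowski--Máthé--Pikhurko and Laczkovich discrepancy estimates. That rounding yields finitely many pieces $A_g=\{a:\text{partner}(a)=a+g\}$, each measurable as a Boolean combination of Borel conditions, which establishes (c).
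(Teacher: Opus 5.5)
The paper contains no proof of this statement. It is quoted from \cite{CS22} and used as a black box in the proof of Theorem~\ref{BMF}, so your proposal can only be judged on its own terms.

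Your implications (c)$\Rightarrow$(b)$\Rightarrow$(a) are correct. Shrinking $X'$ to the conull invariant set $\bigcap_{g\in G}(X'+g)$ is exactly what makes the pieces define a bijection $A\cap X''\to B\cap X''$ with displacements in $S=\{g_1,\dots,g_n\}$. That gives Hall's condition with respect to $S$, and with respect to $-S$ on the $B$ side.

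The implication (a)$\Rightarrow$(c), which is the entire content of the theorem, is not proved, and your sketch rests on a false claim. Enlarging $S$ to $S+F_k$ does \emph{not} give slack proportional to volume. Take $A=B=X$, $S=\{0\}$ and $F=x+F_N$. Then $|(F+F_k)\cap B|=(N+k-1)^d|\Delta|$ while $|F|=N^d|\Delta|$, so the relative slack tends to $0$ as $N\to\infty$. This is the amenability obstruction: no finite enlargement of $S$ gives a uniform $(1+\varepsilon)$-expansion on large boxes. For large boxes, uniformity only buys slack of the order of the boundary layer. That is the same order as the imbalance $\bigl||A\cap Q|-|B\cap Q|\bigr|$ of a box $Q$, which Hall's condition only bounds by $|(Q+S)\setminus Q|$. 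So the finite problem you propose to solve inside each toast piece is in general unbalanced. Nothing in your sketch guarantees that the choices already frozen on inner pieces, together with the edges crossing a piece's boundary, can be extended over the next piece. That extension step is where all the difficulty lies, and it is missing.

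The closing step does not repair this. Your scheme produces an integral matching as a limit, so there is no ``limiting fractional flow'' to round. The Laczkovich discrepancy estimates behind Grabowski--M\'ath\'e--Pikhurko and Marks--Unger concern sets with small boundary in a torus under generic translations. They are not available for arbitrary measurable uniform sets in an abstract free pmp action. The unbounded-displacement problem is also self-inflicted: augmenting along alternating paths in the fixed graph $a\sim a+s$, $s\in S$, never creates matched pairs outside $S$. It only appears because you let $S^{(k)}$ grow.

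A cleaner starting point is a measurable \emph{fractional} perfect matching on the $S$-edges. One way to get it is a weak-$*$ limit of measurable matchings on F\o lner tiles that cover everything outside a thin boundary layer; this uses only Hall's condition and amenability. But rounding that object measurably to a $\{0,1\}$-valued perfect matching is precisely the hard part and cannot be waved through by analogy. As it stands, (a)$\Rightarrow$(c) has to be cited, as the paper does.
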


\section{Bounded common fundamental domains}
Here we give the proof of Theorem \ref{BF}.
\begin{proof}[Proof of Theorem \ref{BF}]
    Consider two full-rank lattices $L, M$ with equal volumes. Since every closed subgroup of $\RR^d$ is up to a non-singular linear transformation, a set of the form:
    \begin{equation}\label{L+M}
        \ZZ^m \times \RR^{d-m}
    \end{equation}
    where $m\in \{0,1,..,d\}$ (see \cite{HR12}, Theorem 9.11) we may assume that $\overline{L+M}$ is a set of the form (\ref{L+M}) for some $m\in \{0,..d\}$.

    We will show that it is possible to construct a common bounded fundamental domain $F$ for $L, M$ in $\RR^d$, by first constructing a bounded common fundamental domain $F_1$ of $L, M$ inside the group $L+M$. Assuming this is done, it is easy to find a bounded fundamental domain $F_2$ of $L+M$ inside $\overline{L+M}=\ZZ^m \times \RR^{d-m}$. Indeed, since $L+M$ is dense in $\overline{L+M}$, we may choose a representative from each class in $\overline{L+M}/(L+M)$  that lies inside the ball centered at the origin of some radius $\epsilon>0$. $F_2$ will then be the set that contains one such representative from each class in $\overline{L+M}/(L+M)$, which is a bounded fundamental domain of $L+M$ inside $\overline{L+M}=\ZZ^m \times \RR^{d-m}$. Finally, the desired common fundamental domain of $L,M$ inside $\RR^d$ will then be:
    \begin{equation*}
        F= [0,1)^m \times \{0\}^{d-m} + F_1 +F_2
    \end{equation*}
    which is, from the way it was constructed, a bounded set.
    \begin{figure}[H]
    \centering
   \begin{minipage}[b]{0.45\textwidth}
        \centering
        \includegraphics[width=\linewidth]{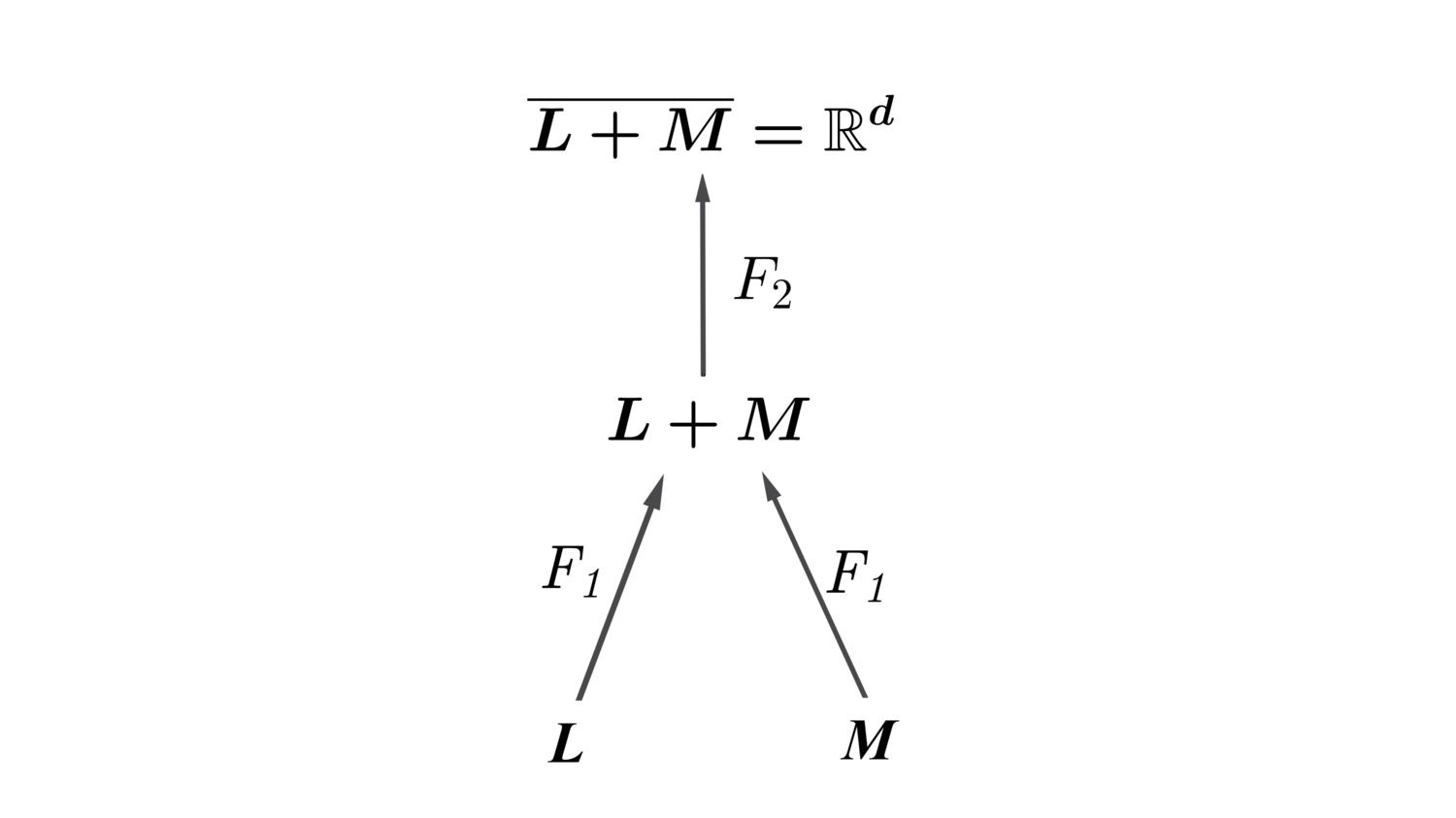}
        \caption{$F=F_1 +F_2$ when $m=0$.}
    \end{minipage}\hfill
    \begin{minipage}[b]{0.45\textwidth}
        \centering
        \includegraphics[width=\linewidth]{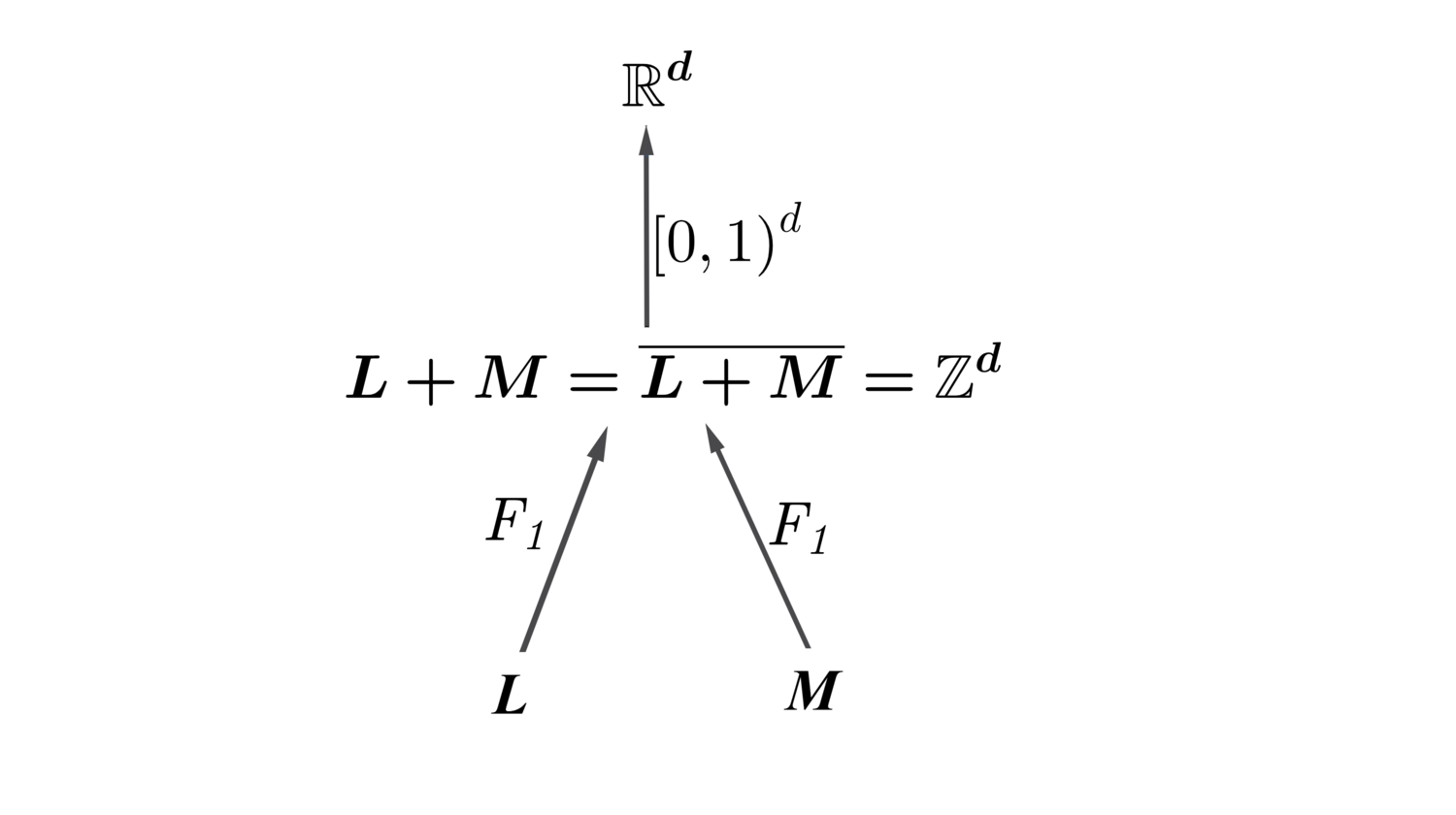}
        \caption{$F=[0,1)^d + F_1$ when $m=d$.}
        
    \end{minipage}
\end{figure}
\begin{figure}[H]
     \begin{minipage}[b]{0.6\textwidth}
		\centering
		\includegraphics[width=\linewidth]{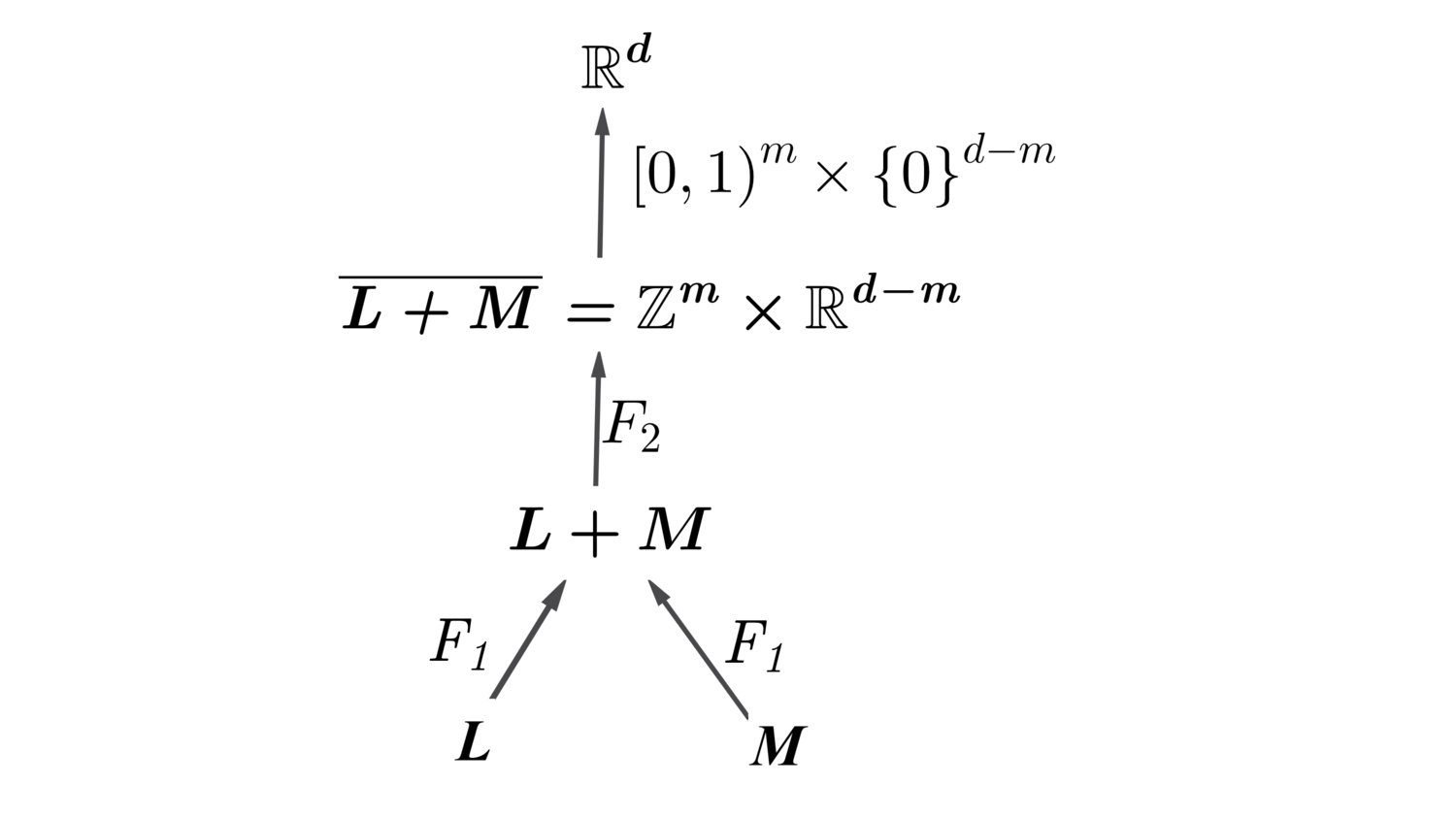}
		\caption{$F=[0,1)^{m}\times \{0\}^{d-m} + F_1 +F_2$ when $m\in \{1,...,d-1\}$}
	\end{minipage}
\end{figure}


    To this end, we are left to show the existence of a bounded common fundamental domain $F_1$ of $L, M$ inside $L+M$. For this, we  treat separately the cases depending on $L\cap M=H$, where $H$ is a lattice in $\RR^d$ of $\textit{rank}(H)=r$, when $r=0$, $r=d$ and $r\in \{1,2,...,d-1\}$
    \begin{itemize}
        \item \textbf{$1$st Case} ($r=0$) $L\cap M=\{0\}$:
        
        This case has already been proved in \cite{Kol97}
        (section $3.2$) and in fact it is an immediate application of a theorem proved by Michel Duneau and Christophe Oguey, which states that for any two lattices $L, M $ of equal volume, there exists a bijection $\phi:L \to M $ and a constant $C>0$ such that:
        \begin{equation}\label{bijection}
            ||\phi(l)-l||\le C
        \end{equation}
        for every $l\in L$ (see \cite{DO91}, Theorem $1$).

        Indeed, notice that for any enumeration of points of $L$, $l_i\in L$ for $i\in \NN$ and for any enumeration of points of $M$, $m_i\in M$ for $i \in \NN$ the set:
        \begin{equation*}
            P=\{l_i +m_i \: | \: i \in \NN\}
        \end{equation*}
        is a common fundamental domain of $L,M$ inside $L+M$ since $L\cap M=\{0\}$. Furthermore, since $L,M$ have the same volumes there exists a bijection  $\phi:L\to M$ as in (\ref{bijection}) which implies that the set:
        \begin{equation*}
            F_1= \{ l-\phi(l) \: | \: l\in L\}
        \end{equation*}
        is a common bounded fundamental domain of $L,M$ inside $L+M$.

        \item \textbf{$2$nd Case} $r=\rank(H)=d$:
        
        Since $H$ is a full-rank lattice in $\RR^d$ and $L,M$ are also full-rank lattices satisfying $\vol(L)=\vol(M)$, it follows that
        \begin{equation}\label{[L:H]}
            [L:H]=\frac{\vol(H)}{\vol(L)}=\frac{\vol(H)}{\vol(M)}=[M:H]=k
        \end{equation}
        for some $k\in \NN$. Consider the quotient groups $L/H$ and $M/H$ and notice from (\ref{[L:H]}) that they have the same order (i.e., the same number of elements). Choose one representative from each class in $L/H$, call it $l_i$ for $i\le k,$ and one representative from each class in $M/H$, call it $m_i$ for $i\le k$. The set
        \begin{equation*}
            F_1 = \{ l_i +m_i \: | \: i\le k\}
        \end{equation*}
        is a common fundamental domain of $L,M$ inside $L+M$. We will show that $F_1$ is a fundamental domain of $L$ inside $L+M$, and similarly, one can prove that it is also a fundamental domain of $M$ inside $L+M$. Indeed, $F_1$ contains at most one representative from each class in $(L+M)/L$ for, assuming otherwise we have that
        \begin{equation*}
        m_i +l_i = m_{i'} + l_{i'} \bmod(L)    
        \end{equation*}
        for some $i\neq i'$. Then  $m_i -m_{i'} \in L\cap M=H$ and so $m_i =m_{i'} \bmod(H)$, a contradiction. Moreover, $F_1$ contains at least one representative from each class in $(L+M)/L$, since if  $m\bmod(L)$ is a class in $(L+M)/L$ for some $m\in M$ then $m= m_i \bmod (H)$ for some $i\le k$, which implies that $m=m_i +h$ for some $h\in H\subseteq L$. This shows that:
        \begin{equation*}
            m\bmod(L) = m_i +h \bmod(L) = m_i \bmod(L) 
        \end{equation*}
        and so $F_1$ is a fundamental domain of $L$ and similarily of $M$ inside $L+M$. Finally, $F_1$ is bounded as a finite set.

        \item \textbf{$3$rd Case} $r=\rank(H)\in \{1,2,..d-1\}$:

        Now we move on to the last case and let $r:=\rank(H) \in \{1,2,...,d-1\}$. Choose a basis $\{h_1,..,h_r\}$ for $H= L\cap M$. Up to a non singular linear transformation we can make the following assumptions:
        \begin{itemize}[label=$\blacktriangleright$]
        	\item For every $i\le r$, $h_i$ is the $d-r+i$-th standard basis vector (i.e., $1$ in position $d-r+i$, and $0$ elsewhere) and so $H=\{0\}^{d-r}\times \ZZ^r$.
        	\item $L=\ZZ^d$
        \end{itemize}
        With these assumptions $M=\mathcal{M}\ZZ^d$ for some matrix $\mathcal{M}\in GL_d (\RR)$ with $|\det(\mathcal{M})|=\vol(M)=\vol(L)=1$. We point out that under the above assumptions, $\overline{L+M}$ may not satisfy ($\ref{L+M}$) but it won't be needed here; once we find the set $F_1$ under the new assumptions we just made, we can go back through the image of $F_1$ via the inverse of the linear transformation we used and continue the construction of the set $F$ as described at the start of the proof.
        
         Now, extend the basis $\{h_1,...,h_r\}$ of $H$,  to a basis of $L$ and to a basis of $M$, so we may get two lattices $L_1, M_1$ s.t.:

        \begin{align}\label{LM}
         L &= L_1 \oplus H  &  M &= M_1\oplus H
        \end{align} 
        (see \cite{Cas96}, Corollary 3, p. 14). Since  $H=\{0\}^{d-r}\times \ZZ^{r}$, $L=\ZZ^d$ we can assume that $L_1= \ZZ^{d-r} \times \{0\}^{r}$. Choosing appropriately a basis matrix  for $L,M$, we can write:
        \begin{align}\label{L,M}
        L &= I_d\ZZ^d & M &= \begin{pmatrix}
        B & 0\\
        C & I_r
       \end{pmatrix} \ZZ^d
       \end{align}
        Where $I_d$ is the identity $d\times d$ matrix, $I_r$ the identity $r\times r$ matrix, $B$ a $d-r \times d-r$ matrix which corresponds to the first $d-r$ coordinates of a basis of $M_1$, and $C$ is a $r\times d-r$ matrix corresponding to the last $r$ coordinates of the same basis of $M_1$. 

        Using the determinant formula for the lower triangular matrix that represents $M$ in (\ref{L,M}), along with the fact that $\vol(L)=\vol(M)$, we obtain:
        \begin{equation*}
        |\det(B)|=1
        \end{equation*}
        and so $B\ZZ^{d-r}$ forms a full-rank lattice in $\RR^{d-r}$ with volume $1$.

        In this form, for any enumeration of points of $L_1$, $l_i$, with $i\in \NN$, any enumeration of points of $M_1$, $m_i$, with $i\in \NN$ and \textbf{\textit{any points}} of $H=L\cap M$, $k_i$ with $i\in \NN$ the set:
        \begin{equation}\label{P}
            P=\{ l_i + m_i +k_i \: | \: i\in \NN\} 
        \end{equation}
        is a common fundamental domain of $L,M$ in $L+M$. We will show that $P$ is a fundamental domain of $L$ and similarily one can show that it is also a fundamental domain of $M$ in $L+M$. To this end we will first show that $P$ contains at least one element from each class in $(L+M)/L$. Consider a class in $(L+M)/L$, $m\bmod(L)$ for some $m\in M=M_1 \oplus H$ due to (\ref{LM}). So there exist $i\in \NN$ and an element $h\in H\subseteq L,M$ such that $m=m_i +h$. Observe now that
        \begin{equation*}
        	m\bmod(L)= m_i +h \bmod(L)= m_i \bmod(L)=m_i +l_i +k_i \bmod(L)
        \end{equation*}
        since $l_i +k_i \in L$ and thus $P$ contains at least one element from each class in $(L+M)/L$.
        Now let us show that $P$ contains at most one element from each class in $(L+M)/L$. Consider two indices $i,i'\in \NN$ such that
        \begin{equation*}
        	m_i +l_i +k_i = m_{i'} +l_{i'}+ k_i \bmod(L)
        \end{equation*}
        It follows that $m_i -m_i' \in L\cap M_1=\{0\}$ and so $i=i'$ and thus $P$ is a fundamental domain of $L$ and similarly of $M$ in $L+M$. Observe that the elements $k_i\in H$ may or may not all be distinct.
        
         For such a set $P$ as in (\ref{P}) to be bounded, it is sufficient for us to show that the orthogonal projection $\pi: \RR^d \to \RR^{d-r} \times \{0\}^r$ is injective on both $L_1, M_1$. First, we will show why it is sufficient and notice that $\pi(M_1)=B\ZZ^{d-r} \times \{0\}^r$ and $\pi(L_1)= \ZZ^{d-r}\times \{0\}^r=L_1$ ($\pi$ is clearly injective on $L_1$). Assuming that $\pi$ is injective on $M_1$, since $B\ZZ^{d-r}, \ZZ^{d-r}$ are full rank lattices on $\RR^{d-r}$ with equal volumes there exists a bijection $\tilde{\phi}:B\ZZ^{d-r} \to \ZZ^{d-r}$ and a constant $C>0$ s.t.:
        \begin{equation*}
            ||\tilde{\phi}(b)-b||<C
        \end{equation*}
        for all $b\in B\ZZ^{d-r}$ (see \cite{DO91}, Theorem 1). Now if $\tilde{\pi}$ is the canonical embedding from $\RR^{d-r}$ to $\RR^{d-r}\times \{0\}^{d-r}$, we can define the bijection $\phi =\tilde{\pi}\circ \tilde{\phi} \circ \tilde{\pi}^{-1} : \pi (M_1)\to \pi (L_1)=L_1$ and thus for every element $(b,0)\in \pi(M_1)= B\ZZ^{d-r} \times \{0\}^r$, with $b\in B\ZZ^{d-r}$
        \begin{equation}\label{phi}
            ||\phi(b,0)-(b,0)||= ||\tilde{\phi}(b)-b||<C
        \end{equation}
        Moreover if $\pi$ is injective on $M_1$, $\pi|_{M_1}:M_1 \to B\ZZ^{d-r} \times \{0\}^{r}$ is a bijection. Hence $ \pi^{-1}\circ\phi \circ \pi=\phi \circ \pi :M_1 \to L_1$ is a bijection. Notice that every element $m\in M_1$ can be uniquely written in the form $m=(b_m,c_m)$ where $b_m\in B\ZZ^{d-r}$, $c_m\in C\ZZ^{d-r}\subseteq \RR^r$ and choose an element  $k_m \in \{0\}^{d-r}\times \ZZ^r=H$ so that $||k_m -(0,c_m)||\le r$. It follows that
        
        \begin{align}\label{bounded}
        \|\phi \circ \pi(m) - m + k_m\|
        &= \| \phi \circ \pi(m) - (b_m,0) - (0,c_m) + k_m \| \notag \\[2mm]
        &\le \|\phi(b_m,0) - (b_m,0)\| + \| k_m - (0,c_m)\| \notag \\[2mm]
        &\le C + r
        \end{align}

        Hence, the set:
        \begin{equation*}
            F_1 := \{ \phi\circ \pi (m) -m +k_m \: | \: m\in M_1\}
        \end{equation*}
        is a common fundamental domain as a set in the form of (\ref{P}) since $\phi\circ \pi: M_1 \to L_1$ is a bijection which is also a bounded set due to (\ref{bounded}).

        \begin{figure}[H]
    \begin{minipage}[b]{1\textwidth}
        \centering
        \includegraphics[width=\linewidth]{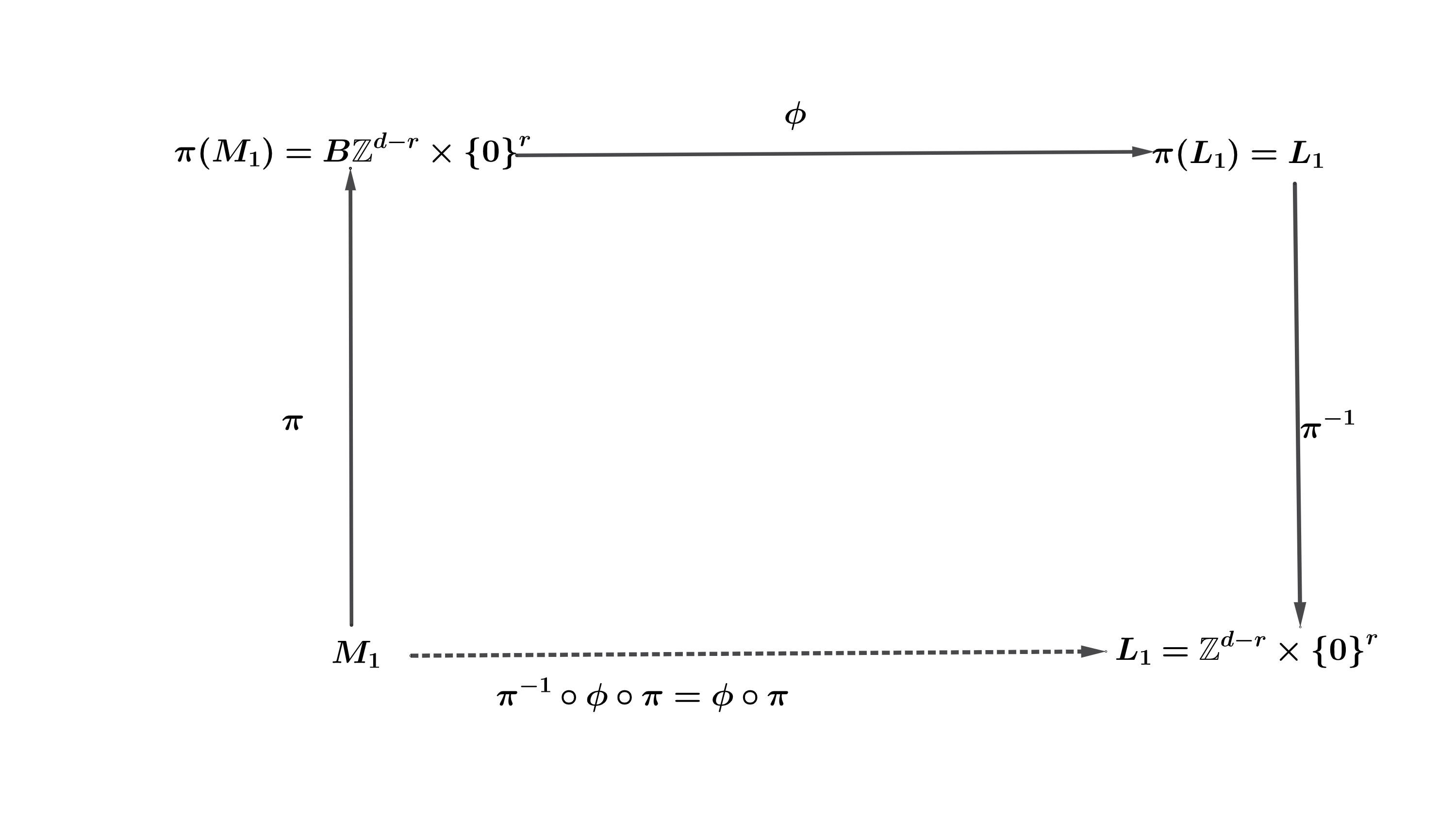}
    \end{minipage}
    \caption{The bijection $\pi^{-1}\circ\phi \circ \pi$ between $M_1, L_1$.}
    \end{figure}

        What's left for us to show is that $\pi:=\pi|_{M_1}:M_1\to B\ZZ^{d-r}\times \{0\}^r$ is injective. Notice that $M_1 = \{ (Bx,Cx)\: | \: x\in \ZZ^{d-r}\}$ due to (\ref{L,M}). Consider now two elements $x_1,x_2 \in \ZZ^{d-r}$ such that
        $\pi(Bx_1,Cx_1)= \pi(Bx_2,Cx_2)$ and so $Bx_1 = Bx_2$. Since $B$ is invertible, we obtain that $x_1=x_2$ and thus, $(Bx_1,Cx_1)= (Bx_2,Cx_2)$ and the proof is complete.        
        
    \end{itemize}
    \end{proof}

    \section{Bounded and measurable common fundamental domains}
We start this section with the proof of the Theorem \ref{BMF}.
\begin{proof}[Proof of Theorem \ref{BMF}]
    Once again, we start the proof by considering two full-rank lattices $L, M\subseteq \RR^d$ and a fundamental parallelepiped $P_L$ of $L$ and $P_M$ of $M$. We can assume that:
    \begin{equation*}
        \overline{L+M}= \ZZ^m \times\RR^{d-m}
    \end{equation*}
    for some $m\in \{0,1,...,d\}$ (see \cite{HR12}, Theorem 9.11). We will treat separately the cases: $m=0$, $m=d$ and $m\in \{1,2,...,d-1\}$. 
    \begin{itemize}
        \item \textbf{Dense on $\RR^d$ ($m=0$):}
        Consider  any system of $d$ linearly independent vectors $v_1,..,v_d\in L$ which are large enough, so that:
        \begin{equation*}
            \Omega=\left\{\sum_{i\le d } t_iv_i \:|\: t_i \in \left[-\frac{1}{2}, \frac{1}{2}\right) \right \}\supset P_L, P_M
        \end{equation*}

        Let $H$ be the full-rank lattice, spanned by $v_1,...,v_d$. In other words $H= <v_1,...,v_d>_\ZZ$.
        \begin{figure}[H]
    \begin{minipage}[b]{0.6\textwidth}
        \centering
        \includegraphics[width=\linewidth]{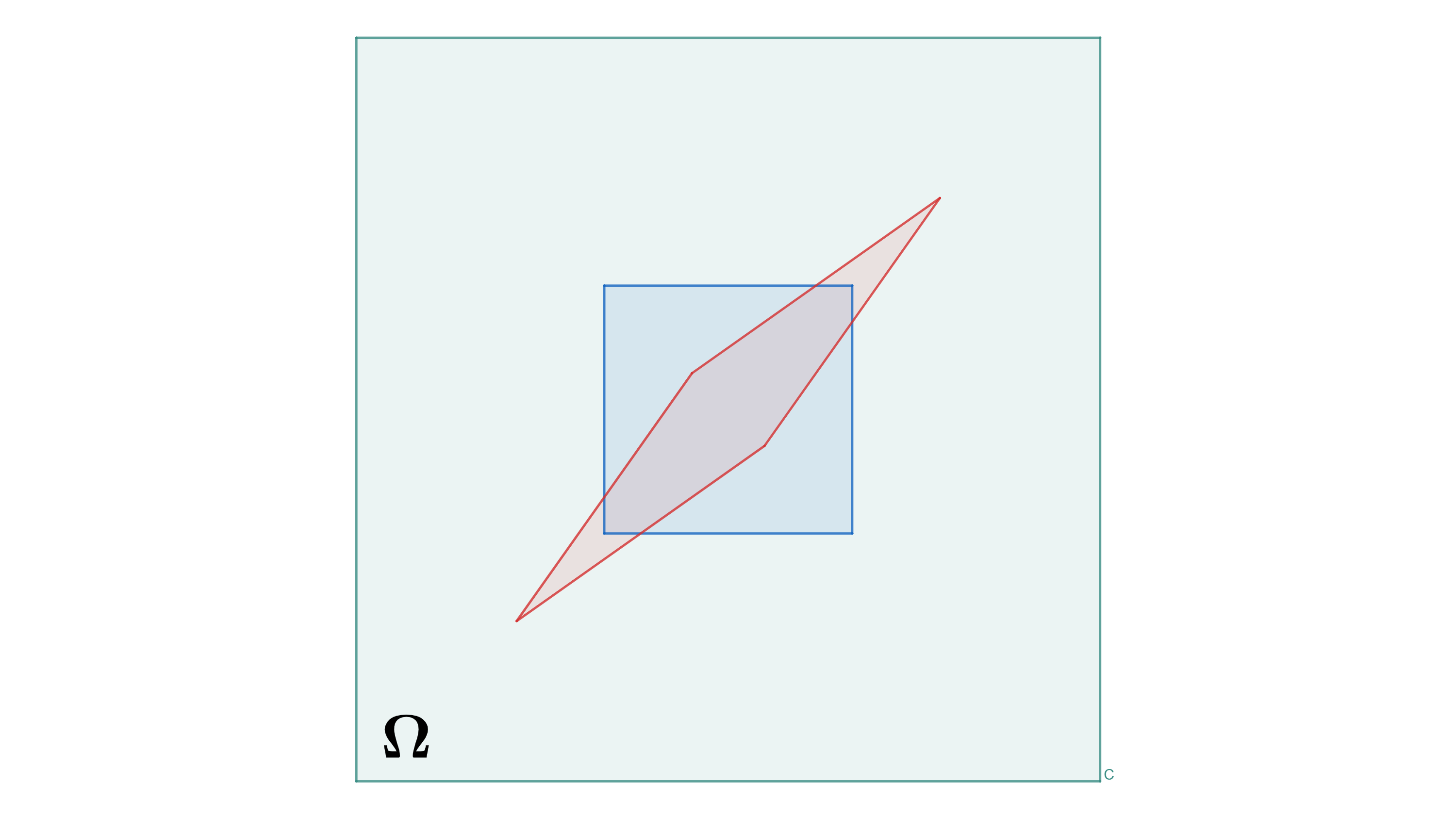}
    \end{minipage}
    \caption{The set $\Omega$ (in green) that contains $P_L=[-\frac{1}{2},\frac{1}{2})^2$ (in blue) and $P_M=\begin{pmatrix}
    		\frac{\sqrt{2}}{2} & 1 \\
    		1 & \frac{\sqrt{2}}{2}
    	\end{pmatrix}
    	\left[-\frac{1}{2}, \frac{1}{2} \right)^2$ (in red).}
    \end{figure}

        Define $X= \Omega = \RR^d/H$, seeing it as a locally compact abelian group endowed with the normalized Lebesgue measure restricted on $X$, so that $X$ is a  Borel probability space. Let $G= (L+M)/H \subseteq X$ acting on $X$ by translations. Notice that since $L+M$ is a finitely generated abelian group, so is $G$. Moreover, notice that $H$ is a full-rank sublattice of $L$. Observe that
        \begin{equation*}
        	G=(L+M)/H \supseteq (M+H)/H\cong M/(H\cap M)
        \end{equation*}
         by the second Isomorphism theorem. Notice that $\rank(M/H\cap M)>0$ for, assuming that $\rank(H\cap M)=d$ would imply that $L+M$ is a lattice and so it can't be dense in $\RR^d$. Thus, $(M+H)/H \cong M/(H\cap M)$ is infinite and so is $G\supseteq (M+H)/H$ and so $\rank(G) >0$.
        Furthermore since $L+M$ is dense on $\RR^d$, $G$ is dense on $X$. Also, $G$ has a free action on $X$ as a subgroup of $X$, and due to the probability measure on $X$ which is invariant under translations, $G$ has also a probability measure preserving action on $X$.

        By Lemma \ref{eq}, since $L,M$ admit a bounded common fundamental domain, $P_L,P_M$ are $L+M$ equidecomposable as subsets of $\RR^d$ which implies that, $P_L,P_M$ are $G$-equidecomposable as subsets of $X$. Indeed, since $P_L, P_M$ are $L+M$ equidecomposable on $\RR^d$ there exists a $N\in \NN$, pairwise disjoint subsets of $P_L$, $S_i$ for $i\le N$ such that
        \begin{equation*}
        	P_L = \bigcup_{i\le N} S_i
        \end{equation*}
        is a partition of $P_L$, and elements $l_i +m_i\in L+M$ for $i\le N$ such that
        \begin{equation*}
        	P_M = \bigcup_{i \le N} S_i + l_i +m_i
        \end{equation*}
        is a partition of $P_M$. Since $P_L,P_M \subseteq X$, it follows that $S_i \subseteq X$ are pairwise disjoint on $X$ for every $i \le N$, which obviously form a partition of $P_L$ in $X$ and also the translations of $S_i$ by $l_i +m_i \bmod(H) \in G$ for every $i\le N$, form a partition of $P_M$ in $X$.
         By Lemma \ref{Guni}, $P_L,P_M$ are $G$-uniform sets on $X$ and so we can apply Theorem \ref{CS22} to show that $P_L, P_M$ are $G$- equidecomposable with measurable pieces on $X\subseteq \RR^d$ and hence $P_L,P_M\subseteq X$ are $L+M$ equidecomposable as subsets of $\RR^d$ with measurable pieces. From Lemma \ref{Afd} we obtain a bounded common almost-fundamental domain of $L, M$ in $\RR^d$ and finally, by Theorem \ref{MeasurableFD} since $L, M$ admit a bounded common fundamental domain of $L, M$ in $\RR^d$, there exists a bounded and measurable common fundamental domain of $L, M$ inside $\RR^d$.

        \item \textbf{Fully commensurable case ($m=d$)}:

        It is known and easy to see that for any two lattices $L, M$ whose sum is $L+M=\overline{L+M}=\ZZ^d$, then $L\cap M=H$ for some full-rank lattice $H\subseteq \RR^d$. We shall prove that in this case if $L, M$ have unequal volumes, then they don't admit a bounded common fundamental domain on $\RR^d$ and so we are left to find a measurable, bounded common fundamental domain for $L, M$ whenever $\vol(L)=\vol(M)$ and $L\cap M= H$, for a full-rank lattice $H\subseteq \RR^d$. In the proof of the second case in Theorem \ref{BF}, we showed the existence of a \textbf{finite} set $F_1$, that is a common fundamental domain of $L, M$ inside $L+M= \ZZ^d$ whenever $\vol(L)=\vol(M)$. The set:
        \begin{equation*}
            F=[0,1)^d + F_1
        \end{equation*}
        is then a common bounded and measurable fundamental domain for $L, M$ on $\RR^d$.


        Let's just show that no bounded common fundamental domain of $L, M$ exists in the case where $L+M = \ZZ^d$, assuming without loss of generality that $\vol(L)<\vol(M)$. If it does exist, call $F$ such a set and apply Lemma \ref{FcapG} so that $F\cap \ZZ^d$ is a bounded (since $F$ is bounded) common fundamental domain of $L, M$ inside $L+M=\ZZ^d$. Since $\vol(L)<\vol(M)$ then $[\ZZ^d:L]=\vol(L)< \vol(M)=[\ZZ^d:M]<\infty$ which implies that the number of classes $\bmod(L)$ in $\ZZ^d/L$ is strictly less than the number of classes $\bmod(M)$ in $\ZZ^d/M$ which is also finite. This means for $F_1\cap\ZZ^d$ that $|F_1\cap \ZZ^d|=|\ZZ^d/L|=[\ZZ^d :L]$ as a fundamental domain of $L$ in $\ZZ^d$ while also  $|F_1\cap\ZZ^d|=[\ZZ^d:M]>[\ZZ^d:L]$ as a fundamental domain of $M$ in $\ZZ^d$ which is clearly a condradiction.
        \item \textbf{Intermediate case ($m\in \{1,2,...,d-1\}$}):
        Call $F_2$ a bounded common fundamental domain of $ L, M$ in $\RR^d$. By Lemma \ref{eq}, we have that $P_L,P_M$ are $L+M$ equidecomposable as subsets of $\RR^d$. Since $L+M\subset \overline{L+M}=\ZZ^m\times \RR^{d-m}$ we have that $\widetilde{P_L}:= P_L\cap \ZZ^m \times \RR^{d-m}$, $\widetilde{P_M}:=P_M\cap \ZZ^m \times \RR^{d-m}$ are $L+M$ equidecomposable as subsets of $\ZZ^m \times \RR^{d-m}$. To see this, observe that if $\{S_i\}_{i\le N}$ for some $N\in \NN$ forms a partition of $P_L$ such that the collection $\{S_i +l_i +m_i\}_{i\le N}$ forms a partition of $M$, for some $l_i +m_i \in L+M \subseteq \ZZ^m \times \RR^{d-m}$, then the collection $\{S_i \cap \ZZ^m \times \RR^{d-m}\}_{i\le N}$ forms a partition of $\widetilde{P_L}$ and also $\{(S_i \cap \ZZ^m \times \RR^{d-m}) + l_i +m_i\}_{i\le N}$ forms a partition of $\widetilde{P_M}$ since $L+M \subseteq \ZZ^m \times \RR^{d-m}$.
    
    First, we wish to obtain a $\mathcal{P}(\ZZ^m) \otimes \mathcal{L}_{d-m}$- measurable and bounded common almost-fundamental domain $F_1$ of $L,M$ in $\ZZ^m \times \RR^{d-m}$, where $\mathcal{P}(\ZZ^m)$ is the power set of $\ZZ^m$ and  $\mathcal{L}_{d-m}$ is the Lebesgue $\sigma$-algebra of subsets of $\RR^{d-m}$. We remind to the reader that
    \begin{equation*}
        \mathcal{P}(\ZZ^m)\otimes \mathcal{L}_{d-m} =\left \{ \bigcup_{n\in \ZZ^m} \{n\} \times A_n \: |\: A_n \in \mathcal{L}_{d-m} \right \}
    \end{equation*}
    In other words a set $A\in \mathcal{P}(\ZZ^m) \otimes \mathcal{L}_{d-m}$, if and only if for every $n\in \ZZ^m$ the slice of $A$:
    \begin{equation}\label{slice}
        A_n = \{ y \in \RR^n \:|\:  (n,y) \in A\} \in \mathcal{L}_{d-m}
    \end{equation}
    Assuming that such a set $F_1$ exists, the set
    \begin{equation}\label{F}
        F=[0,1)^m \times \{0\}^{d-m} + F_1= \bigcup_{n\in \ZZ^m}([0,1)^m+n )\times F_n
    \end{equation}
    where $F_n$ as in (\ref{slice}) is a $\mathcal{L}_{d-m}$ measurable, bounded  common almost-fundamental domain of $L,M$ inside $\RR^d$ and thus, our proof will be complete by Theorem \ref{MeasurableFD}. We will show that $F$ is indeed a bounded common almost-fundamental domain of $L,M$ in the end of this proof. First we will use the Cieśla-Sabok theorem (see \cite{CS22}, Theorem 2) to show the existence of such a set $F_1$.

    Again, by Lemma \ref{FcapG}, $\widetilde{P_L}:= P_L\cap \ZZ^m \times \RR^{d-m}$, $\widetilde{P_M}:=P_M\cap \ZZ^m \times \RR^{d-m}$ are fundamental domains of $L,M$ respectively in $\ZZ^m \times \RR^{d-m}\supseteq L,M$ and in fact they are both $\mathcal{P}(\ZZ^m) \otimes \mathcal{L}_{d-m}$ measurable . One way to check the measurability, is to see $\widetilde{P_L}, \widetilde{P_M}$ as the unions of the preimages of $P_L,P_M$ respectively, via the continuous maps $\phi_z : \RR^{d-m} \to \RR^d$, with:
    \begin{equation}\label{phiz}
        \phi_z (x)= (z,x)
    \end{equation} over all $z\in \ZZ^m$. Notice also that since $P_L, P_M$ are bounded sets,  $\widetilde{P_L}, \widetilde{P_M}$ are bounded, and so the number of the nonempty slices of $\widetilde{P_L}, \widetilde{P_M}$  is finite. In other words, there exists a positive integer $n$ such that for every $k\in \ZZ^m$, $|k|\ge n$ we have
    \begin{equation}\label{empty}
    \emptyset = \widetilde{P_L,k}=\widetilde{P_M,k}
    \end{equation}
    where $\widetilde{P_L}_{,k}, \widetilde{P_M}_{,k}$ as in (\ref{slice}).
    Furthermore since $L,M\subseteq \ZZ^m \times \RR^{d-m}$, $\widetilde{P_L}$ and similiraly $\widetilde{P_M}$, has nonempty interior in $\ZZ^m \times \RR^{d-m}$. In other words, there exists a slice of $\widetilde{P_L}$ and similarily of $\widetilde{P_M}$ in $\RR^{d-m}$ with nonempty interior.
    \begin{figure}[H]
    \centering
    \begin{minipage}[b]{0.45\textwidth}
        \centering
        \includegraphics[width=\linewidth]{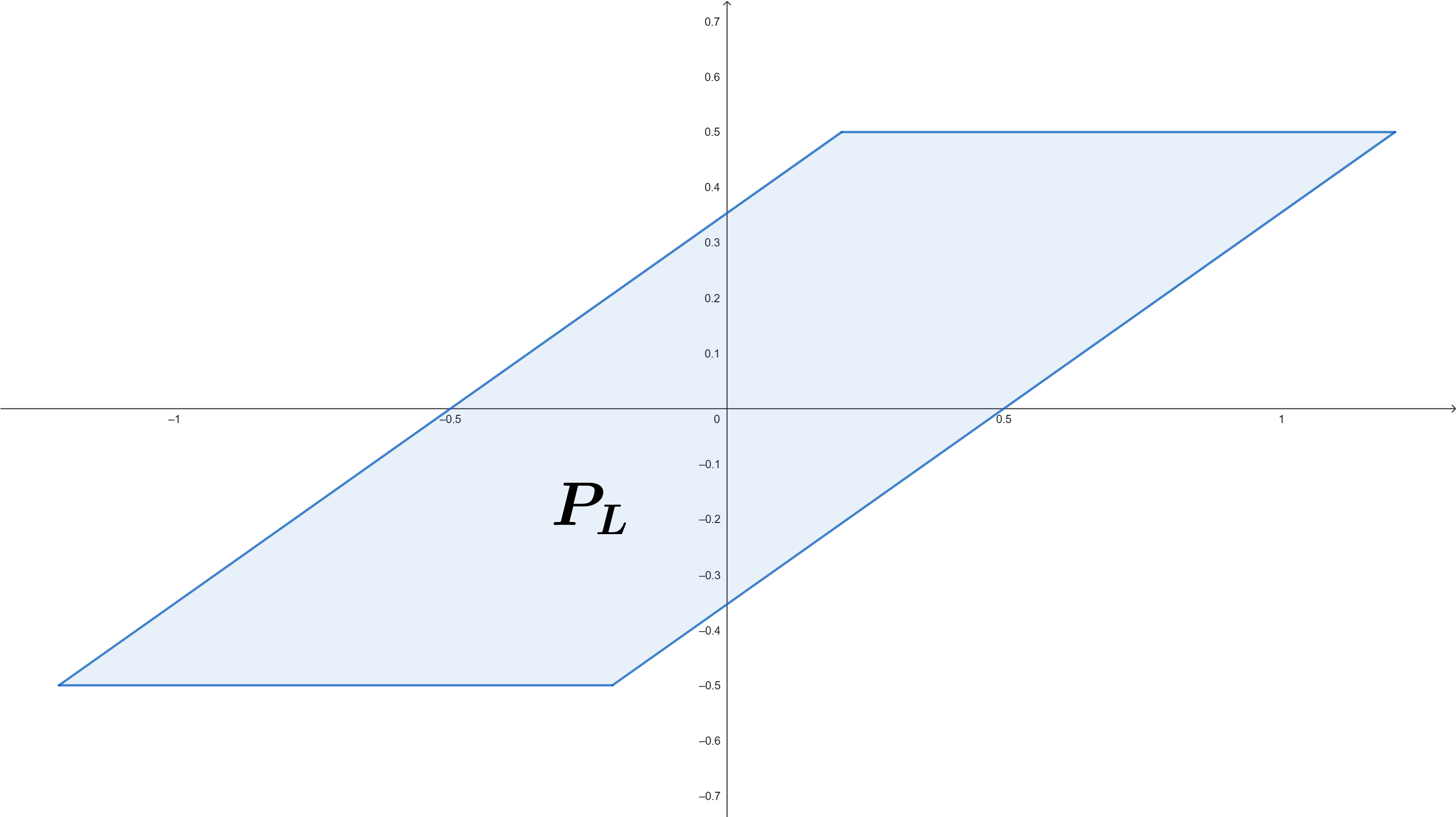}
        \caption{A fundamental parallelepipid of a lattice $L\subseteq \RR^2$}
    \end{minipage}\hfill
    \begin{minipage}[b]{0.45\textwidth}
        \centering
        \includegraphics[width=\linewidth]{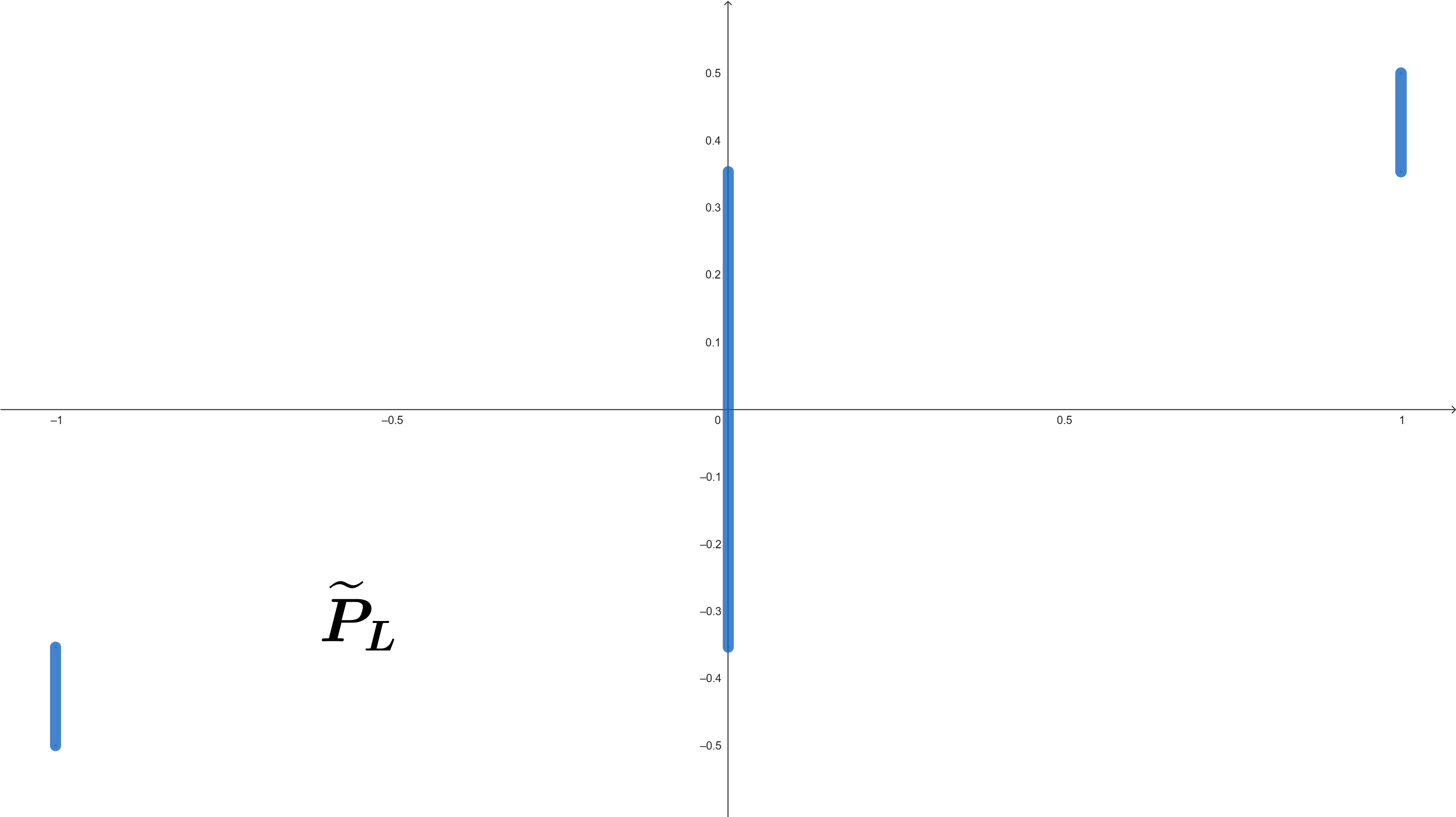}
        \caption{$\widetilde{P}_L=P_L\cap \ZZ \times \RR$ (in blue).}
    \end{minipage}
\end{figure}
     Indeed, the origin is an interior point of $P_L$ and $P_M$. This implies that both $P_L, P_M$ contain an open ball centered at the origin of radius $\epsilon>0$. The preimage of this open ball via the continuous map $\phi_0$ as in (\ref{phiz}), will then be a nonempty open set (the origin belongs to $\widetilde{P_L}$ and $\widetilde{P_M}$) contained in $\widetilde{P_L}$ and $\widetilde{P_M}$ and thus $\widetilde{P_L},\widetilde{P_M}$ have nonempty interior in $\ZZ^m \times \RR^{d-m}$. For the rest of this proof we will not refer to the original fundamental parallelepipeds $P_L,P_M$ on $\RR^d$. For this reason we denote $P_L:=\widetilde{P_L}$ and $P_M:=\widetilde{P_M}$.
    
    Take now $v_1,...,v_d$ linearly idependent elements of $L\subseteq \overline{L+M}= \ZZ^m \times \RR^{d-m}$, and take them large enough so that

    \begin{equation*}
        \Omega= \left \{\sum_{i\le d} t_i v_i \:| \: t_i \in \left[-\frac{1}{2}, \frac{1}{2}\right) \right \} \cap \ZZ ^m \times \RR^{d-m} \supseteq P_M, P_L
    \end{equation*}
    
    Consider $X= \Omega = \ZZ^m \times \RR^{d-m}/<v_1,...,v_d>_\ZZ$ and $G= (L+M)/ <v_1,...,v_d>_\ZZ$. $X$ as a bounded and measurable set (for the same reason that $P_L$ and $P_M$ are measurable)  of $\ZZ^m \times \RR^{d-m}$, can be viewed as a Borel probability space with the normalizeded $card \otimes m_{d-m}$ measure on $X$ and $G$ has a free pmp action on $X$. Also, $G$ is dense on $X$ and since $H=<v_1,..,v_d>_\ZZ$ is a full-rank sublattice of $L$, we have that
    \begin{equation*}
    	G= (L+M)/H \supseteq (M+H)/H \cong M/(H\cap M)
    \end{equation*}
    by the second isomorphism theorem. Notice that $ \rank(M/(H\cap M))>0$ for, assuming that $\rank(H\cap M) = d$ then $L+M$ would be a lattice in $\RR^d$ and so it can't be dense on $\ZZ^m \times \RR^{d-m}$ when $m<d$. Hence $G$ is an infinite group since $(M+H)/H$ is and so $\rank(G)>0$.

    Since $P_L, P_M$ are $L+M$ equidecomposable on $\ZZ^m \times \RR^{d-m}$, we have that $P_L, P_M$ are $G$-equidecomposable on $X$, and so to apply the theorem of Ciesla and Sabok Theorem \ref{CS22}, we need to show that $P_L, P_M$ are $G$ uniform sets in $X$. But we showed that $P_L, P_M$ are measurable sets on $X$, with nonempty interior, and $G$ is dense on $X$. By Lemma \ref{Guni}, $P_L, P_M$ are $ G$-uniform sets in $X$ and so an application of Ciesla and Sabok's Theorem \ref{CS22} gives us that $P_L, P _M$ are $L+M$ equidecomposable up to measure zero in $\ZZ^m \times \RR^{d-m}$ with measurable pieces.
    \begin{figure}[H]
    \centering
    \begin{minipage}[b]{0.45\textwidth}
        \centering
        \includegraphics[width=\linewidth]{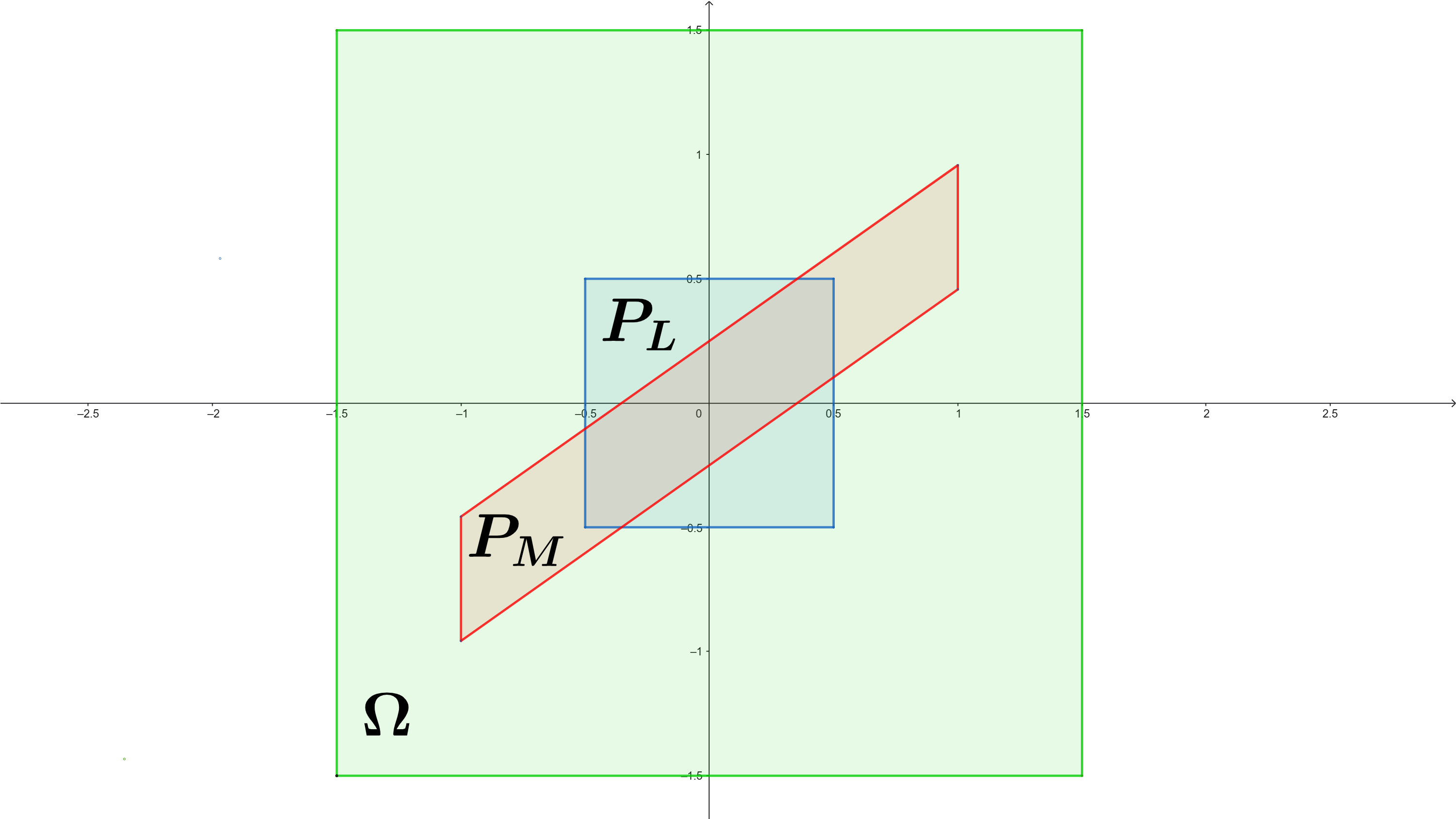}
        \caption{An example of $\Omega, P_L, P_M\subseteq \RR^2$ before intersecting with $\ZZ\times \RR$.}
        \label{10}
    \end{minipage}\hfill
    \begin{minipage}[b]{0.45\textwidth}
        \centering
        \includegraphics[width=\linewidth]{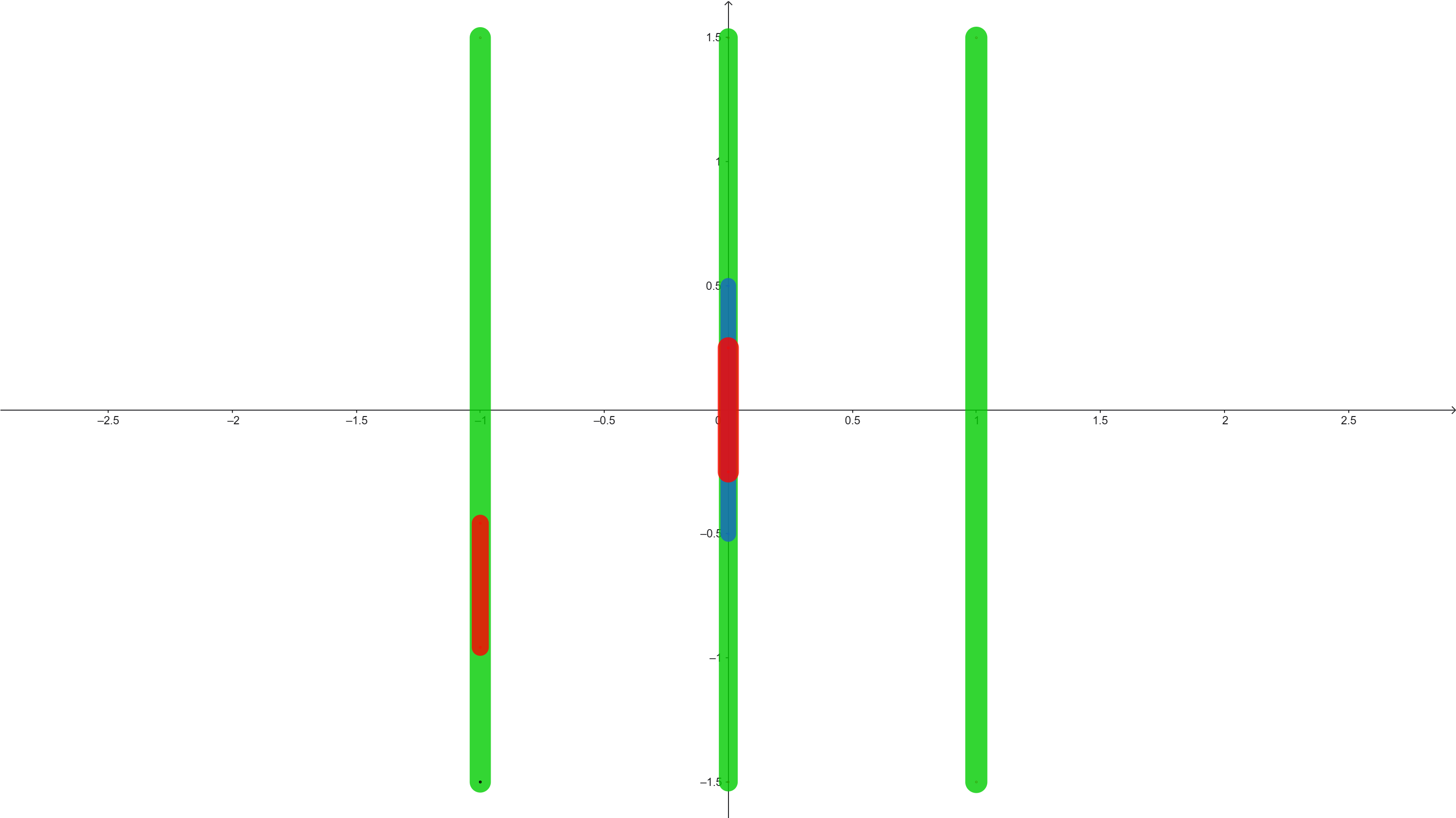}
        \caption{$\Omega$ (in green), $P_L$ (in blue) and  $P_M$ (in red) after intersecting with $\ZZ\times \RR$ in Figure \ref{10}.}
    \end{minipage}
\end{figure}

    This means that there exist a full measure set $X\subseteq \ZZ^m\times \RR^{d-m}$, $\mathcal{P}(\ZZ^{m})\otimes\mathcal{L}_{d-m}$- measurable pieces $S_i \subseteq P_L$ and elements $l_i \in L, m_i\in M$ for $i\le n\in \NN$ such that:
    \begin{equation*}
        P_L\cap X=\bigcup_{i\le n}S_i\cap X
    \end{equation*}
    is a partition of $P_L\cap X$ and:
    \begin{equation*}
        P_M \cap X = \bigcup_{i\le n} (S_i +l_i +m_i) \cap X
    \end{equation*}
    is a partition of $P_M\cap X$. Consider the set:
    \begin{equation*}
        F_1= \bigcup_{i\le n} (S_i +l_i) \cap X
    \end{equation*}
    We will show that the bounded and $\mathcal{P}(\ZZ^m)\times \mathcal{L}_{d-m}$-measurable $F_1$ is a common almost- fundamental domain of $\ZZ^m \times \RR^{d-m}$ i.e., it tiles $\ZZ^m\times \RR^{d-m}$ by translations of $L$ and by translations of $M$. We will show that $F_1$ tiles $\ZZ^m \times \RR^{d-m}$ by translations of $L$, and similarly, one can show the tiling by translations of $M$. In other words, we have to show that:
    \begin{equation*}
        \sum_{l\in L} 1_{F_1}(x-l)= 1
    \end{equation*}
    for $card\otimes m_{d-m}$-almost every $x \in \ZZ^m\times \RR^{d-m}$.
    Notice that $P_L$ is a measurable fundamental domain of $L$ in $\ZZ^m \times \RR^{d-m}$ and notice that since $X$ is a full measure (with respect to $card\times m_{d-m}$) set in $\ZZ^m \times \RR^{d-m}$ we have that:
    \begin{equation*}
        1_{P_L}=1_{P\cap X}= \sum_{i\le n} 1_{S_i\cap X}= \sum_{i\le n}1_{S_i}
    \end{equation*}
    for $card\otimes m_{d-m}$-a.e. on $\ZZ^m \times \RR^{d-m}$. As for $F_1$, since it is $L$- equidecomposable in $\ZZ^m \times \RR^{d-m}$ with $P_L$ we have that:
    \begin{equation*}
    1_{F_1}= \sum_{i\le n}1_{(S_i+l_i)\cap X} = \sum_{i\le n}1_{S_i+l_i}
    \end{equation*}
    for $card\otimes m_{d-m}$-a.e. on $\ZZ^m \times \RR^{d-m}$ and so:
    \begin{align*}
    	1 &= \sum_{l\in L} 1_{P_L}(x-l) 
    	= \sum_{l\in L} \sum_{i\le n} 1_{S_i}(x-l) 
    	= \sum_{i\le n} 1_{S_i+l_i}(x-(l+l_i)) = \\[2mm]
    	&= \sum_{i\le n} \sum_{l\in L} 1_{S_i + l_i}(x-l) 
    	= \sum_{l\in L} 1_{F_1}(x-l)
    \end{align*}
    for $card\otimes m_{d-m}$-a.e. on $\ZZ^m \times \RR^{d-m}$ as we had to show.
    
    Finally notice that if $A=\{x \in \ZZ^m \times \RR^{d-m} \: | \: \sum_{l\in L} 1_{F_1} (x-l)\neq 1\}$ is the exceptional set of $F_1$, it follows that every slice $A_k$ of $A$ is a $m_{d-m}$- null set as a subset of $\RR^{d-m}$ and hence the exceptional set of $F$ in $\RR^d$
    \begin{equation*}
        [0,1)^m \times \{0\}^{d-m} + A= \bigcup_{k\in \ZZ^m} ([0,1)^m+k)\times A_k
    \end{equation*}
    is $m_d= m_m\otimes \: m_{d-m}$- null set and so $F$ is a bounded common almost-fundamental domain of $L,M$ in $\RR^d$. By Theorem \ref{MeasurableFD} $L,M$ admit a bounded and measurable common fundamental domain in $\RR^d$ as we had to show and the proof is complete.
    \end{itemize}    
\end{proof}
In \cite{GKS25} (Theorem 1.1) it was proved that for any two lattices whose sum is direct, it is impossible to find even a non-measurable bounded common fundamental domain. An immediate application of Theorem \ref{BMF} generalize this result for any two lattices.
\begin{corollary}\label{Evol}
Any two full-rank lattices admitting a bounded, common fundamental domain have essentially the same volume.
\end{corollary}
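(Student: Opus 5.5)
The plan is to reduce the set-theoretic statement to the measurable one by Theorem \ref{BMF}, and then compare Lebesgue measures. Suppose $L, M \subseteq \RR^d$ are full-rank lattices that admit a bounded common fundamental domain $F_0$, not assumed measurable. Theorem \ref{BMF} then supplies a bounded \emph{Lebesgue measurable} common fundamental domain $F$ of $L$ and $M$ in $\RR^d$. Everything after this is the standard fact, already recalled in the Notation paragraph, that a measurable fundamental domain of a full-rank lattice has measure equal to the volume of that lattice.

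To make this self-contained, I would verify the standard fact directly. Let $F$ be a measurable fundamental domain of $L$ and $P_L$ a fundamental parallelepiped of $L$. Then $\RR^d = \bigsqcup_{l\in L}(F+l)$, so
\begin{equation*}
m_d(P_L)=\sum_{l\in L} m_d\bigl(P_L\cap (F+l)\bigr)=\sum_{l\in L} m_d\bigl((P_L-l)\cap F\bigr)=m_d(F),
\end{equation*}
where the last equality holds because the translates $P_L-l$ also partition $\RR^d$. Since $m_d(P_L)=|\det A|=\vol(L)$, this gives $m_d(F)=\vol(L)$. Boundedness is not even needed here; countability of $L$ and translation invariance of $m_d$ are what justify the interchange. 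Applying the same argument with $M$ gives $m_d(F)=\vol(M)$, and therefore $\vol(L)=\vol(M)$.

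Alternatively, one can bypass the measure computation. By Lemma \ref{eq}, a bounded common fundamental domain gives $P_L\overset{L+M}{\sim}P_M$, and as in the proof of Theorem \ref{BMF} one could upgrade this to measurable pieces, so measure preservation under translations would give $m_d(P_L)=m_d(P_M)$. I would prefer the first route, since Theorem \ref{BMF} already packages all of the case analysis.

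The only real content is Theorem \ref{BMF}, so I need to check that its proof does not secretly use equal volumes. In the dense case and the intermediate case the argument uses only the existence of a bounded common fundamental domain, via Lemma \ref{eq}, Theorem \ref{CS22} and Theorem \ref{MeasurableFD}. In the fully commensurable case ($m=d$), the proof separately shows that unequal volumes are impossible by an index count, $[\ZZ^d:L]\neq[\ZZ^d:M]$. So in every case Theorem \ref{BMF} applies as stated, and this verification is the main point to be careful about. The word ``essentially'' in the statement just means equality of volumes.
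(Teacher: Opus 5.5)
Your proposal is correct and is essentially the paper's proof: you apply Theorem \ref{BMF} to get a bounded measurable common fundamental domain $F$, and then read off $m_d(F)=\vol(L)$ and $m_d(F)=\vol(M)$. Your verification of the measure identity and your check that the proof of Theorem \ref{BMF} never assumes equal volumes are accurate details the paper leaves implicit; in that check, the case $m=d$ rules out unequal volumes by the index count.
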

\begin{proof}
    Assume that $L, M\subseteq\RR^d$ are full-rank lattices and $F_1\subseteq \RR^d$ is a bounded common fundamental domain of $L, M$ in $\RR^d$. By Theorem $\ref{BMF}$, there exists a bounded and measurable common fundamental domain $F\subseteq \RR^d$ of $ L, M$ in $\RR^d$. Denote by $m_d$ the Lebesgue measure on $\RR^d$. Since $F$ is a measurable fundamental domain of $L$, we must have $m_d(F)= \vol(L)$. Symmetrically, since $F$ is a measurable fundamental domain of $M$, we also must have that $m_d(F)=\vol(M)$ and thus $\vol(L)=\vol(M)$.
\end{proof}
We can now prove our main result:
\begin{proof}[Proof of Theorem \ref{MT}]
Let $L, M\subseteq \RR^d$ be two full-rank lattices of equal volumes. By Theorem \ref{BF}, there exists a bounded common fundamental domain of $L, M$ in $\RR^d$, which can be chosen measurable by Theorem \ref{BMF}.
\end{proof}
\bibliographystyle{amsalpha}  
\bibliography{references}
\end{document}